\newtheoremstyle{theorem}{5pt}{5pt}{\itshape}{}{\bfseries}{.}{.5em}{}
\theoremstyle{theorem}
\newtheorem{theorem}{Theorem}
\newtheorem{lemma}[theorem]{Lemma}
\newtheorem{corollary}[theorem]{Corollary}
\titlespacing*{\section}{0pt}{3.5ex plus 1ex minus .2ex}{2.3ex plus .2ex}
\titlespacing*{\subsection}{0pt}{3.5ex plus 1ex minus .2ex}{2.3ex plus .2ex}
\begin{document}

\title{On Sums Involving Fourier \\ Coefficients of Maass Forms for $\mathrm{SL}(3,\mathbb Z)$}
\author{Jesse J\"a\"asaari\footnote{Department of Mathematics and Statistics, University of Helsinki}{ } and Esa V\!. Vesalainen\footnote{BCAM --- Basque Center for Applied Mathematics}}
\date{}
\maketitle

\begin{abstract}
\noindent We derive a truncated Voronoi identity for rationally additively twisted sums of Fourier coefficients of Maass forms for $\mathrm{SL}(3,\mathbb Z)$, and as an application obtain a pointwise estimate and a second moment estimate for the sums in question.
\end{abstract}

\section{Introduction}

The purpose of this paper is to estimate sums
\[\sum_{m\leqslant x}A(m,1)\,e\!\left(\frac{mh}k\right)\]
where $A(m,1)$ are Fourier coefficients of a fixed Maass form for $\mathrm{SL}(3,\mathbb Z)$ and $h/k$ is a reduced fraction with a small denominator.
Estimating exponential sums is a central and an interesting problem in number theory. Classical circle and Dirichlet's divisor problems, as well as their twisted versions, have been under investigation for a long time and the analogous problem for estimating exponential sums weighted by Fourier coefficients of automorphic forms is natural as not much is known about these coefficients. The classical and cusp form problems also exhibit fascinating analogies, largely because the classical problems have modular origins as well. 

Arguably the first exponential sum result of this kind is Wilton's classical result \cite{Wilton}, which after Jutila's improvement \cite{Jutila1987} says that
\[\sum_{m\leqslant x}a(m)\,e(m\alpha)\ll x^{1/2}\]
uniformly for $\alpha\in\mathbb R$, where $a(m)$ are the normalized Fourier coefficients of a fixed holomorphic cusp form for the full modular group. The analogous result for Maass forms for $\mathrm{SL}(2,\mathbb Z)$ was established in \cite{Jaasaari--Vesalainen}.

As is usual with such problems, the behaviour of the sum in question depends heavily on whether $\alpha$ is near a fraction with a small denominator or not, the former case exhibiting the analogies to the classical problems. In addition, behaviour near such a fraction heavily depends on the behaviour at the fraction. In particular, one expects better estimates. Indeed, for $\alpha=h/k$ with small $k$, the above sum is of the order of magnitude $k^{1/2}\,x^{1/4}$ in the mean and certainly $\ll k^{2/3}\,x^{1/3+\varepsilon}$ pointwise \cite[Chapter 1]{Jutila}. For Maass forms for $\mathrm{SL}(2,\mathbb Z)$, similar results are true, but the pointwise estimate reflects the fact that the Ramanujan--Petersson conjecture is not known to hold for Maass forms. Indeed, one has Meurman's pointwise estimate $\ll k^{2/3}\,x^{1/3+\vartheta/3+\varepsilon}$ \cite{Meurman}, where $\vartheta$ is the exponent towards the Ramanujan--Petersson conjecture.

We are interested in the analogous sums for Maass forms for $\mathrm{SL}(3,\mathbb Z)$. In this setting, a result similar to the classical Wilton--Jutila estimate has been established by Miller \cite{Miller}: one has
\[\sum_{m\leqslant x}A(m,1)\,e(m\alpha)\ll x^{3/4+\varepsilon},\]
uniformly for $\alpha\in\mathbb R$. From this one also sees how moving from the $\mathrm{GL}(2)$ world to the $\mathrm{GL}(3)$ world poses extra challenges: one conjectures that the above sum is $\ll x^{1/2+\varepsilon}$.

In this paper we study the above sums for $\alpha=h/k$. The main tools used in the $\mathrm{GL}(2)$ setting are the truncated Voronoi identities for the sums in question. Thus, we establish such a truncated Voronoi identity for $\mathrm{GL}(3)$ Maass forms and use it to estimate the aforementioned sums pointwise and in the mean.

In the case of Maass forms the Ramanujan--Petersson conjecture is not currently known and so we are also interested in how our results depend on the growth rate of the Fourier coefficients. Namely if we have the bound of the form $A(m_1,m_2)\ll (m_1m_2)^{\vartheta+\varepsilon}$ for some $\vartheta\in[0,\infty[$ and for all $\varepsilon>0$, then we are interested to know how $\vartheta$ shows up in the estimates. 

\section{The main results}

\subsection{Truncated Voronoi identity}

Let us consider a fixed Maass form for $\mathrm{SL}(3,\mathbb Z)$ with the Fourier coefficients $A(m_1,m_2)$.
Before stating our first theorem, given a parameter $N\in\left[2,\infty\right[$, we define for each $k\in\mathbb Z_+$ with $k\leqslant N$ a positive integer $N_k$ so that, say $N\leqslant N_k\leqslant 2N$, and
\[\left\|\frac{N_k+1/2}{d^2}\right\|\gg\frac1{d\left(1+\log^2k\right)},\]
for every $d\mid k$ with $d\leqslant\sqrt{2N}$,
where $\left\|\cdot\right\|$ denotes the distance from the set of integers. Such an $N_k$ exists by Lemma \ref{parameter-modification} below. Why this is useful will become clear later. In any case, we have the following twisted truncated Voronoi identity.
\begin{theorem}\label{gl3-truncated}
Let $x,N\in\left[2,\infty\right[$ with $N\ll x$, and let $h$ and $k$ be coprime integers with $1\leqslant k\leqslant x$, $k\leqslant N$ and $k\ll(Nx)^{1/3}$, the latter having a sufficiently small implicit constant depending on the underlying Maass form. Then we have
\begin{align*}
&\sum_{m\leqslant x}A(m,1)\,e\!\left(\frac{mh}k\right)\\
&=\frac{x^{1/3}}{\pi\,\sqrt3}\sum_{d\mid k}\frac1d\sum_{d^2m\leqslant N_k}
\frac{A(d,m)}{m^{2/3}}\,S\!\left(\overline h,m;\frac kd\right)\cos\!\left(\frac{6\pi d^{2/3}(mx)^{1/3}}k\right)\\
&\qquad+O(k\,x^{2/3+\vartheta+\varepsilon}\,N^{-1/3})+O(k\,x^{1/6+\varepsilon}\,N^{1/6+\vartheta}).
\end{align*}
\end{theorem}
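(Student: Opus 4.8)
The plan is to start from a smoothed version of the sum and work through the $\mathrm{GL}(3)$ Voronoi summation formula of Miller--Schmid (or Goldfeld--Li), then carefully convert the smoothed identity into a sharp-cutoff identity with explicit main term and error terms.

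First I would introduce a smooth weight. Let $V$ be a smooth function which is $1$ on $[0,1]$, vanishes on $[1+\delta,\infty[$, and has derivatives bounded in terms of $\delta$; I will optimize $\delta$ at the end (expecting $\delta\approx$ some small power of $x$ times the relevant parameters). Write
\[
\sum_{m\leqslant x}A(m,1)\,e\!\left(\frac{mh}{k}\right)
=\sum_{m\geqslant 1}A(m,1)\,e\!\left(\frac{mh}{k}\right)V\!\left(\frac{m}{x}\right)
-\sum_{m}A(m,1)\,e\!\left(\frac{mh}{k}\right)\left(V\!\left(\frac{m}{x}\right)-\mathbf 1_{m\leqslant x}\right),
\]
handling the second sum --- supported on $m\in\,]x,(1+\delta)x]$ --- by Miller's bound $\ll (\delta x)^{3/4+\varepsilon}$ (applied to the dyadic-like short range via partial summation), or by a trivial bound using $A(m,1)\ll m^{\vartheta+\varepsilon}$, whichever is better. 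For the smoothed sum I would split the additive twist $e(mh/k)$ via the standard device of grouping $m$ into residue classes mod $k$ and Gauss/Ramanujan-sum manipulations so that the $\mathrm{GL}(3)$ Voronoi formula (which naturally takes additive twists by $a/c$) applies term by term; this is where the divisors $d\mid k$, the moduli $k/d$, the Kloosterman sums $S(\overline h,m;k/d)$, and the inverse $\overline h$ of $h$ mod $k/d$ all enter, exactly matching the shape of the claimed main term.

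Next comes the analytic heart: after Voronoi, the smoothed sum becomes a dual sum over $d^2 m$ of $A(d,m)$ against an integral transform $\widetilde V$ of $V$ involving (a combination of) Bessel-type or hypergeometric kernels with argument roughly $(d^2 m x)^{1/3}/(k/d)$-scale. I would use the known asymptotic expansion of this $\mathrm{GL}(3)$ Hankel-type transform --- its leading term is an oscillatory factor $\cos(6\pi d^{2/3}(mx)^{1/3}/k)$ against $m^{-2/3}$ times a main-term constant $1/(\pi\sqrt 3)$, times the overall $x^{1/3}$ --- to extract precisely the stated main sum, now with a smooth cutoff at $m\asymp N/d^2$ coming from $\widetilde V$'s decay. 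Then I replace the smooth cutoff by the sharp cutoff at $N_k$: the point of choosing $N_k$ with $\|(N_k+1/2)/d^2\|\gg 1/(d(1+\log^2 k))$ (Lemma 2) is that near the transition point the oscillatory factor $\cos(\cdots)$ is bounded away from its stationary behaviour, so truncating there costs only a controlled amount. The tail $d^2 m>N$ of the dual sum, together with the smoothing error and the transform-remainder terms, I would estimate using $A(d,m)\ll(dm)^{\vartheta+\varepsilon}$ and the decay of $\widetilde V$, summing the geometric-type series in $m$ and $d\mid k$; balancing these against the short-interval error from the de-smoothing step produces the two error terms $O(k\,x^{2/3+\vartheta+\varepsilon}N^{-1/3})$ and $O(k\,x^{1/6+\varepsilon}N^{1/6+\vartheta})$, the first from the Voronoi tail and the second from the smoothing. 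The hypotheses $k\leqslant N$, $k\leqslant x$, and $k\ll(Nx)^{1/3}$ with small implied constant are exactly what is needed to keep the dual variable in the range where the asymptotic expansion of the integral transform is valid and where the main term genuinely dominates.

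The main obstacle I anticipate is the rigorous control of the $\mathrm{GL}(3)$ Hankel transform $\widetilde V$: unlike the $\mathrm{GL}(2)$ case where one has classical Bessel functions with well-known uniform asymptotics, here one must handle a Mellin--Barnes integral of a product of three gamma factors, extract its main oscillatory term uniformly in the parameters $d$, $k$, $x$, and $N$, and bound the remainder with enough decay in $m$ to make the tail sum converge after summing over divisors of $k$ --- all while tracking the dependence on $\vartheta$. A stationary-phase or repeated-integration-by-parts analysis of this transform, uniform in all parameters in the stated ranges, is the technically delicate step; everything else (the residue-class/Kloosterman bookkeeping, the de-smoothing, the final optimization) is comparatively routine.
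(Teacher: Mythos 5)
Your route (smooth the cutoff, apply the Miller--Schmid/Goldfeld--Li Voronoi formula, extract the main term from the asymptotics of the Hankel-type transform, then de-smooth) is genuinely different from the paper's, which never smooths: it applies the truncated Perron formula directly to the sharply cut sum, shifts the contour, inserts the functional equation of the additively twisted $L$-function, reduces the $\Gamma$-quotient to a single one by Stirling, and then splits the dual sum according to whether the resulting $t$-integral has a saddle point in $[\Lambda,T]$ (terms $d^2m\leqslant N_k$ give the cosine main terms via a Bessel-function lemma; terms $d^2m>N_k$ are killed by the first derivative test). Two of your side claims are off: Miller's bound for the short range $]x,(1+\delta)x]$ gives only $x^{3/4+\varepsilon}$, not $(\delta x)^{3/4+\varepsilon}$ (harmless, since the trivial bound $\delta x^{1+\vartheta+\varepsilon}$ with $\delta\asymp k(Nx)^{-1/3}$ does reproduce the first error term); and the condition $\left\|(N_k+1/2)/d^2\right\|\gg 1/(d(1+\log^2k))$ is not about the cosine being ``away from stationary behaviour'' --- it is a diophantine separation guaranteeing that $d^2m$ never falls too close to $N_k+1/2$ for any $d\mid k$, so that harmonic sums of the form $\sum_m|d^2m-N_k-1/2|^{-1}$, which arise from first-derivative-test denominators at the truncation edge, are $\ll(1+\log^2k)\log N$.

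The genuine gap is the step where you ``replace the smooth cutoff by the sharp cutoff at $N_k$.'' In the transition block $d^2m\asymp N$ the smoothed transform $\widetilde V$ differs from the sharp-cutoff kernel by a quantity of the \emph{same} order as the main term itself, namely $\asymp x^{1/3}(d^2m)^{-2/3}d^{4/3}$ per term; summing this by absolute values over the block with Weil's bound and Rankin--Selberg gives $\asymp k^{1/2+\varepsilon}x^{1/3}N^{1/3}$, and the inequality $k^{1/2}x^{1/3}N^{1/3}\ll k\,x^{1/6}N^{1/6+\vartheta}$ is equivalent to $(Nx)^{1/3}\ll k^{3/2}N^{3\vartheta}$, which fails exactly in the theorem's range $k\ll(Nx)^{1/3}$ (e.g.\ for $k$ bounded). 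So the discrepancy cannot be discarded trivially: you must exploit cancellation in $m$ across the edge, which is where the $N_k$ condition actually earns its keep. The paper avoids this difficulty by never leaving the contour integral --- the edge terms are controlled by the first derivative test in $t$ with denominators $\log\bigl(d^2m/(N_k+1/2)\bigr)$, and the diophantine choice of $N_k$ makes the resulting sums over $m$ converge with only logarithmic losses. Your plan would need an analogous oscillation argument at the truncation edge to be made to work.
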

\noindent
Here the implicit constant in the condition $N\ll x$ can be chosen freely, but the one in the condition $k\ll(Nx)^{1/3}$ depends on the underlying Maass form. Furthermore, the implicit constants in the result depend on the Maass form and on $\varepsilon$, and $\vartheta\in\left[0,\infty\right[$ is such that $A(m_1,m_2)\ll (m_1m_2)^{\vartheta+\varepsilon}$. By the work of Kim and Sarnak \cite{Kim--Sarnak} we know that $\vartheta=5/14$ is admissible. The notation $\overline h$ denotes an arbitrary integer such that $h\overline h\equiv1\pmod k$, and $S(h,m;k)$ denotes the usual Kloosterman sum
\[S(h,m;k)=\sum_{x\in\mathbb Z_k^\times}e\!\left(\frac{hx+m\overline x}k\right),\]
where $\overline x$ denotes a residue class modulo $k$ such that $x\overline x\equiv1\pmod k$. 
We could replace here $N_k$ by $N$ but then the second error term would become the weaker $O(k^{2-\vartheta}\,x^{1/6+\varepsilon}\,N^{1/6+\vartheta})$. On the other hand, replacing $N_k$ by $N$ allows values $k>N$.

In the $\mathrm{GL}(2)$ setting, twisted truncated Voronoi identities have been derived previously by Jutila \cite{Jutila} for the error term in the classical Dirichlet divisor problem and for sums involving Fourier coefficients of holomorphic cusp forms, and by Meurman \cite{Meurman} for sums involving Fourier coefficients of $\mathrm{GL}(2)$ Maass forms. Truncated Voronoi identities have been key tools in the study of the classical circle and divisor problems (see e.g.\ \cite[Chapter 13]{Ivic}).

Truncated Voronoi identities are reasonably sharp approximate formulations of full Voronoi identities, which are essentially what one gets if one formally substitutes a characteristic function of an interval to the corresponding Voronoi summation formula. Naturally, this formal substitution is analytically challenging as the Voronoi summation formulae typically require the test functions to be smooth enough, and convergence problems get more difficult when moving to higher rank setting. Twisted Voronoi summation formulae have been implemented for $\mathrm{GL}(3)$ by Miller and Schmid \cite{Miller--Schmid} and for $\mathrm{GL}(n)$ in \cite{Goldfeld--Li1, Miller--Schmid2008}. A~truncated Voronoi identity for the generalized divisor problem involving $d_k(n)$ has been  given in \cite{Ivic--Zhai}, and a truncated Voronoi identity for plain sums of coefficients of fairly general $L$-functions has been given in \cite{Friedlander--Iwaniec}.

\subsection{Second moment estimate}

The truncated Voronoi identity stated above can be used to derive upper bounds for second moments. In particular, we get:
\begin{theorem}\label{meansquare} Let $h$ and $k$ be positive integers such that $(h,k)=1$. Then for $X\in\left[1,\infty\right[$ we have
\begin{align*}
&\int\limits_1^X\left|\sum_{m\leqslant x}A(m,1)\,e\!\left(\frac{mh}k\right)\right|^2\mathrm d x
\ll 
k^2\,X^{5/3+2\vartheta+\varepsilon}.
\end{align*}
\end{theorem}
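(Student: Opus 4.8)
The plan is to feed the truncated Voronoi identity of Theorem~\ref{gl3-truncated} into a classical mean value inequality of large sieve type for exponential sums. Since that identity requires $N\ll x$, I would first reduce to a dyadic window: it suffices to prove $\int_Y^{2Y}\bigl|\sum_{m\leqslant x}A(m,1)\,e(mh/k)\bigr|^2\,\mathrm dx\ll k^2Y^{5/3+2\vartheta+\varepsilon}$ for each $Y\in[1,X]$, since summing over $Y=X,X/2,X/4,\dots$ gives a geometric series dominated by its top term (the tail, where $Y\asymp1$, being trivial). In each window I would apply Theorem~\ref{gl3-truncated} with $N\asymp Y$, which is permissible because the implicit constant in $N\ll x$ is free; the hypothesis $k\ll(Nx)^{1/3}$ then reads $k\ll Y^{2/3}$, and for the remaining $k$ the crude bound $\sum_{m\leqslant x}|A(m,1)|\ll x^{1+\vartheta+\varepsilon}$ already gives $\int_Y^{2Y}\bigl|\cdots\bigr|^2\ll Y^{3+2\vartheta+\varepsilon}\ll k^2Y^{5/3+2\vartheta+\varepsilon}$ there. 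With $N\asymp Y\asymp x$ both error terms in Theorem~\ref{gl3-truncated} are $\ll kx^{1/3+\vartheta+\varepsilon}$, so they contribute $\ll k^2Y^{2/3+2\vartheta+\varepsilon}\cdot Y=k^2Y^{5/3+2\vartheta+\varepsilon}$ to the integral, which is precisely the target size and explains why $N\asymp Y$ is the right choice.

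It then remains to treat the main term
\[
M(x)=\frac{x^{1/3}}{\pi\sqrt3}\sum_{d\mid k}\frac1d\sum_{d^2m\leqslant N_k}\frac{A(d,m)}{m^{2/3}}\,S(\overline h,m;k/d)\,\cos\!\left(\frac{6\pi\,d^{2/3}(mx)^{1/3}}{k}\right).
\]
Writing $\cos\theta=\tfrac12(e^{i\theta}+e^{-i\theta})$, substituting $u=x^{1/3}$, and bounding the resulting weight $u^4$ trivially on $[Y^{1/3},(2Y)^{1/3}]$, one obtains $\int_Y^{2Y}|M(x)|^2\,\mathrm dx\ll Y^{4/3}\int\bigl|\sum_n a_n\,e(\gamma_n u)\bigr|^2\,\mathrm du$, where $n$ runs over the positive integers $\leqslant N_k$ of the form $d^2m$ with $d\mid k$, the frequency attached to $n$ is $\gamma_n=\tfrac3k\,n^{1/3}$ (together with its negative $-\gamma_n$), and the associated coefficient is $a_n=\tfrac12\sum_{d\mid k,\ d^2\mid n}(n/d^2)^{-2/3}A(d,n/d^2)\,S(\overline h,n/d^2;k/d)$. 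The step demanding the most care --- and the heart of the matter --- is the spacing of the frequencies $\gamma_n$. Because $d^2m$ is an \emph{integer}, the distinct values of $n^{1/3}$ that occur are exactly $1^{1/3},2^{1/3},\dots,\lfloor N_k\rfloor^{1/3}$, so the gap from $\tfrac3k\,n^{1/3}$ to its nearest neighbour is $\asymp\frac1{k\,n^{2/3}}$ (a positive and a negative frequency are separated by $\gg1/k$, which is larger). Inserting this into the Montgomery--Vaughan mean value inequality in its individualised form gives $\int_Y^{2Y}|M(x)|^2\,\mathrm dx\ll Y^{4/3}\sum_{n\leqslant N_k}|a_n|^2\bigl(Y^{1/3}+k\,n^{2/3}\bigr)$.

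To conclude I would use $|A(d,m)|\ll(dm)^{\vartheta+\varepsilon}$ and, via Weil's bound together with $(\overline h,k/d)=1$, $|S(\overline h,m;k/d)|\ll(k/d)^{1/2+\varepsilon}$; then, by Cauchy--Schwarz, $|a_n|^2\ll n^{\varepsilon}\sum_{d^2m=n}|A(d,m)\,S(\overline h,m;k/d)|^2\,d^{-2}m^{-4/3}$, and the two ensuing double sums over $d\mid k$ and $m\leqslant N_k/d^2$ are evaluated by comparison with integrals. Every $d$-sum that arises has exponent $<-1$, hence converges absolutely, so no hypothesis on $\vartheta$ beyond $\vartheta\geqslant0$ is needed: the $Y^{1/3}$ piece contributes $\ll k^{1+\varepsilon}\bigl(Y^{5/3}+Y^{4/3+2\vartheta}\bigr)Y^{\varepsilon}$ and the $k\,n^{2/3}$ piece contributes $\ll k^{2+\varepsilon}Y^{5/3+2\vartheta+\varepsilon}$, the latter dominating and matching the claimed bound; summing over the dyadic windows finishes the proof. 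One could instead expand $|M(x)|^2$ directly, handling the diagonal trivially and the off-diagonal oscillatory integrals by the first derivative test together with the bound $\min\bigl(Y^{5/3},\,kY^{4/3}|\gamma_n-\gamma_{n'}|^{-1}\bigr)$; the delicate near-diagonal off-diagonal terms are exactly what the mean value inequality packages away.
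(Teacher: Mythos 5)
Your proposal is correct, and its outer skeleton coincides with the paper's: dyadic reduction to $[Y,2Y]$, a trivial (Rankin--Selberg) estimate disposing of $k\gg Y^{2/3}$, the choice $N\asymp Y$ in Theorem \ref{gl3-truncated}, and the observation that the Voronoi error terms alone already cost $k^2Y^{5/3+2\vartheta+\varepsilon}$. Where you genuinely diverge is the main term. The paper (Lemma \ref{mainterm}) expands $\left|M(x,h/k)\right|^2$ into a double sum over $(d_1,m_1)$ and $(d_2,m_2)$, integrates termwise, treats the diagonal $d_1^2m_1=d_2^2m_2$ by Cauchy--Schwarz plus Rankin--Selberg, and handles the off-diagonal by the first derivative test, bounding the resulting sum of $k\,X^{4/3}\left|d_1^{2/3}m_1^{1/3}-d_2^{2/3}m_2^{1/3}\right|^{-1}$ with the pointwise bound $A(d,m)\ll(dm)^{\vartheta+\varepsilon}$, ending with $k^2X^{5/3+\vartheta+\varepsilon}$. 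You instead regroup by $n=d^2m$, substitute $u=x^{1/3}$, and invoke the Montgomery--Vaughan mean value inequality with the (correctly verified) spacing $\delta_n\asymp k^{-1}n^{-2/3}$; your bookkeeping of $\left|a_n\right|^2$ via Cauchy--Schwarz over the $\ll n^\varepsilon$ divisors $d$ is sound, and the resulting bound suffices. Your route has the advantage that the delicate near-diagonal off-diagonal terms are absorbed into the $\delta_n^{-1}$ term of the inequality, and, if one uses Rankin--Selberg on average rather than the individual coefficient bound in the final summations, it even yields $k^2X^{5/3+\varepsilon}$ for the main term with no $\vartheta$ at all --- slightly sharper than Lemma \ref{mainterm}, though immaterial for the theorem since the Voronoi error terms already contribute $X^{5/3+2\vartheta}$. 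The paper's method is more elementary and self-contained, needing only the first derivative test rather than an imported large-sieve-type inequality. One cosmetic slip: your displayed formula for $a_n$ omits the factor $1/d$ from $M(x,h/k)$, but you restore the $d^{-2}$ correctly in the subsequent Cauchy--Schwarz step, so nothing breaks.
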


\noindent The mean square behaviour of the error term of the Dirichlet's divisor problem was considered by Cram\'er \cite{Cramer} and Jutila has studied the average behaviours of the error term in the additively twisted divisor problem and the rationally additively twisted exponential sums attached to holomorphic cusp forms \cite{Jutila1985, Jutila}. Chandrasekharan and Narasimhan \cite{Chandrasekharan--Narasimhan1, Chandrasekharan--Narasimhan2} have established similar results for plain sums of coefficients of Dirichlet series of fairly general $L$-functions.

\subsection{Pointwise estimates}

The estimation of untwisted sums $\sum_{m\leqslant x}A(m,1,\ldots,1)$ with Fourier coefficients of a $\mathrm{GL}(n)$ Maass form has been considered before in the works \cite{Goldfeld--Sengupta, Lu, Meher}. In particular, the bound in \cite{Lu, Meher} is $\ll x^{(n^2-n)/(n^2+1)+\varepsilon}$. One also has the bound $\ll x^{(n-1)/(n+1)+\varepsilon}$ if $\vartheta=0$ \cite{Friedlander--Iwaniec}. In the case $n=2$ the best result to date seems to be due to \cite{Lu2}; the sum is $\ll x^{1027/2827+\varepsilon}$.

With rational additive twists, the sums seem to have been considered before only for $n=2$, for holomorphic cusp forms in \cite{Jutila} and for Maass forms in \cite{Meurman}. In fact, \cite{Meurman} considers also the dependence on the underlying Maass form. Of course, pointwise bounds for rational additive twists, and other special values such as suitable transcendental $\alpha$, have been considered before for $\mathrm{GL}(3)$ Maass forms in works such as \cite{Booker1, Booker2, Ren--Ye, Godber}, but only when wide weight functions are present, and such weighted sums behave quite differently from the unweighted sums.

For $\mathrm{GL}(3)$ Maass forms, we get estimates for the rationally additively twisted linear exponential sums as follows.
\begin{corollary}\label{pointwise-gl3}
Let $x\in\left[1,\infty\right[$, and let $h$ and $k$ be coprime integers with $1\leqslant k\ll x^{2/3}$. Then,
\[\sum_{m\leqslant x}A(m,1)\,e\!\left(\frac{mk}k\right)\ll k^{1/2+\varepsilon}\,x^{2/3}+k\,x^{1/3+\vartheta+\varepsilon}
.\]
Furthermore, when $\vartheta\leqslant1/3$ and $k\ll x^{2/3-2\vartheta}$, we have
\[\sum_{m\leqslant x}A(m,1)\,e\!\left(\frac{mh}k\right)\ll k^{3/4}\,x^{1/2+\vartheta/2+\varepsilon}+k^{9/8+3\vartheta/4}\,x^{1/4+3\vartheta^2/2+3\vartheta/4+\varepsilon}.\]
In particular, for $\vartheta=0$ and $k\ll x^{2/3}$, we have
\[\sum_{m\leqslant x}A(m,1)\,e\!\left(\frac{mh}k\right)\ll k^{3/4}\,x^{1/2+\varepsilon}.\]
\end{corollary}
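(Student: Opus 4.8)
The plan is to deduce Corollary~\ref{pointwise-gl3} from Theorem~\ref{gl3-truncated} by estimating the dual sum on its right-hand side and then choosing the free parameter $N$ to balance this against the two error terms. First I would bound the dual sum: pulling out the factor $x^{1/3}/(\pi\sqrt3)$ and estimating the cosines trivially, it remains to bound, for each $d\mid k$, the sum $D_d:=\sum_{d^2m\le N_k}m^{-2/3}A(d,m)\,S(\overline h,m;k/d)$. I would apply Cauchy--Schwarz in $m$, splitting the weight $m^{-2/3}$ evenly between the two factors. For the factor containing $A(d,m)$, the Rankin--Selberg bound $\sum_{n_1^2n_2\le Y}|A(n_1,n_2)|^2\ll Y^{1+\varepsilon}$ gives $\sum_{m\le M}|A(d,m)|^2\ll(d^2M)^{1+\varepsilon}$ unconditionally and uniformly in $d$, hence $\sum_{m\le M}m^{-2/3}|A(d,m)|^2\ll d^{2+\varepsilon}M^{1/3+\varepsilon}$ by partial summation. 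For the factor containing the Kloosterman sum, Weil's bound together with $(\overline h,k/d)=1$ gives $S(\overline h,m;k/d)\ll(k/d)^{1/2}\tau(k/d)$, so that $\sum_{m\le M}m^{-2/3}|S(\overline h,m;k/d)|^2\ll(k/d)^{1+\varepsilon}M^{1/3}$. Since $N\le N_k\le 2N$, these combine to $D_d\ll d^{1+\varepsilon}(k/d)^{1/2+\varepsilon}(N/d^2)^{1/3}$, and weighting by $1/d$ and summing over $d\mid k$ (where $\sum_{d\mid k}d^{-7/6+\varepsilon}=O(1)$) produces
\[
\frac{x^{1/3}}{\pi\sqrt3}\sum_{d\mid k}\frac1d\sum_{d^2m\le N_k}\frac{A(d,m)}{m^{2/3}}\,S(\overline h,m;k/d)\cos\!\left(\frac{6\pi d^{2/3}(mx)^{1/3}}k\right)\ll k^{1/2+\varepsilon}(Nx)^{1/3+\varepsilon}.
\]

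It then remains to combine this with the error terms $O(k\,x^{2/3+\vartheta+\varepsilon}N^{-1/3})$ and $O(k\,x^{1/6+\varepsilon}N^{1/6+\vartheta})$ of Theorem~\ref{gl3-truncated}, and to optimize over $N$. For the first estimate I would take $N\asymp x$: the hypothesis $k\ll x^{2/3}$ makes this admissible (one has $k\le N$ and $k\ll(Nx)^{1/3}$), the main term becomes $\ll k^{1/2+\varepsilon}x^{2/3}$, and both error terms collapse to $\ll k\,x^{1/3+\vartheta+\varepsilon}$. For the second estimate I would instead take $N\asymp k^{3/4}x^{1/2+3\vartheta/2}$, chosen so that the main term and the first error term coincide: both then equal $k^{3/4}x^{1/2+\vartheta/2+\varepsilon}$, while the second error term works out to $k^{9/8+3\vartheta/4}x^{1/4+3\vartheta^2/2+3\vartheta/4+\varepsilon}$. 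This choice of $N$ satisfies $2\le k\le N\ll x$ and $k\ll(Nx)^{1/3}$ precisely when $k\ll x^{2/3-2\vartheta}$ and $\vartheta\le1/3$ (the latter so that the exponent $2/3-2\vartheta$ is non-negative). Finally, specializing $\vartheta=0$, the second error term $k^{9/8}x^{1/4+\varepsilon}$ is dominated by $k^{3/4}x^{1/2+\varepsilon}$ whenever $k\ll x^{2/3}$, which gives the last assertion.

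Since the analytic content is already contained in Theorem~\ref{gl3-truncated}, there is no genuinely hard step here. The two points calling for care are: applying the Rankin--Selberg bound in the $d$-uniform form above so that the divisor sum over $d\mid k$ contributes only a factor $k^\varepsilon$ rather than a positive power of $k$, and the bookkeeping of the admissibility conditions $2\le k\le N\ll x$ and $k\ll(Nx)^{1/3}$ once $N$ is taken as a function of $k$, $x$ and $\vartheta$ for the second estimate --- it is precisely this bookkeeping that produces the restrictions $\vartheta\le1/3$ and $k\ll x^{2/3-2\vartheta}$ there.
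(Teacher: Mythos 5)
Your proposal is correct and follows essentially the same route as the paper: bound the dual sum by $\ll k^{1/2+\varepsilon}x^{1/3}N^{1/3}$, then choose $N\asymp x$ for the first estimate and $N\asymp k^{3/4}x^{1/2+3\vartheta/2}$ for the second, with the same admissibility bookkeeping. The only (immaterial) difference is that you bound the dual sum via Cauchy--Schwarz with the second-moment Rankin--Selberg estimate, whereas the paper pulls out Weil's bound and applies the first-moment estimate $\sum_{m\leqslant M}|A(d,m)|\ll dM$ directly with partial summation; the paper also explicitly disposes of $k$ exceeding the small implicit constant required by Theorem~\ref{gl3-truncated} via the trivial bound $\ll x$, a detail worth adding.
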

Miller's \cite{Miller} bound gives $\ll x^{3/4+\varepsilon}$ for arbitrary real twists.  Under the Ramanujan--Petersson conjecture $\vartheta=0$, the above upper bound $\ll k^{3/4}\,x^{1/2+\varepsilon}$ is an improvement of $\ll x^{3/4+\varepsilon}$ when $k\ll x^{1/3}$. Let us mention in passing that the works \cite{Li--Young, Godber} have considered how Miller's bound depends on the underlying Maass form.

For comparison, for $\mathrm{GL}(2)$ Maass forms one has the bound $\ll k^{2/3}\,x^{1/3+\vartheta/3+\varepsilon}$ due to \cite{Meurman}. It was observed in \cite{Vesalainen} that for holomorphic cusp forms and larger $k$, improvements can be obtained using estimates for short exponential sums from \cite{Ernvall-Hytonen, Ernvall-Hytonen--Karppinen}. For $\mathrm{GL}(2)$ Maass forms analogous improvements were obtained in \cite{Jaasaari--Vesalainen}, for example the bound $\ll k^{(1-6\vartheta)/(4-6\vartheta)}\,x^{3/(8-12\vartheta)+\varepsilon}$ which holds for $x^{3/(5+6\vartheta)-1/2+\vartheta}\ll k\ll x^{5/18+\vartheta/3}$.

\section{Notation}

The symbols $\ll$, $\gg$, $\asymp$, $O$ and $o$ are used for the usual asymptotic notation: for complex valued functions $f$ and $g$ in some set $X$, the notation $f\ll g$ means that $\left|f(x)\right|\leqslant C\left|g(x)\right|$ for all $x\in X$ for some implicit constant $C\in\mathbb R_+$. When the implied constant depends on some parameters $\alpha,\beta,\ldots$, we use $\ll_{\alpha,\beta,\ldots}$ instead of mere $\ll$. The notation $g\gg f$ means $f\ll g$, and $f\asymp g$ means $f\ll g\ll f$.

All the implicit constants are allowed to depend on the underlying Maass form and on $\varepsilon$, which denotes an arbitrarily small fixed positive number, which may not the same on each occurrence.

As usual, complex variables are written in the form $s=\sigma+it$, and we write $e(x)$ for $e^{2\pi ix}$. The subscript in the integral $\int_{(\sigma)}$ means that we integrate over the vertical line $\Re s=\sigma$. For simplicity, we write $\left\langle\cdot\right\rangle$ for $(1+\left|\cdot\right|^2)^{1/2}$.

\section{Properties of the additively twisted $\mathrm{GL}(3)$ $L$-functions}

The additive twisted $\mathrm{GL}(3)$ formula will be derived by considering the additively twisted version of the Godement--Jacquet $L$-function attached to a Maass form $\psi$ for $\mathrm{SL}(3,\mathbb Z)$ with Fourier coefficients $A(m_1,m_2)$. For practical purposes, it is better to consider the Dirichlet series
\[L_j\!\left(s+j,\frac hk\right)=\sum_{m=1}^\infty\frac{A(m,1)}{m^{s}}\left(e\!\left(\frac{mh}k\right)+(-1)^j\,e\!\left(\frac{-mh}k\right)\!\right)\]
for $j\in\{0,1\}$.
This converges absolutely for $\sigma>1$ by the Rankin--Selberg estimate
\[\sum_{m\leqslant x}\left|A(m,n)\right|^2\ll_nx.\]
(For this, see e.g. \cite{Goldfeld}, Remark 12.1.8.)
This Dirichlet series has an entire analytic continuation and satisfies the functional equation
\[L_j\!\left(s+j,\frac hk\right)
=i^{-j}\,k^{-3s+1}\,\pi^{3s-3/2}\,G_j(s+j)\,\widetilde L_j\!\left(1-j-s,\frac{\overline h}k\right),\]
where $G_j(s)$ is given by the $\Gamma$-factors
\[G_j(s+j)=\frac{\displaystyle{\Gamma\!\left(\frac{1-s+j+\alpha}2\right)\Gamma\!\left(\frac{1-s+j+\beta}2\right)\Gamma\!\left(\frac{1-s+j+\gamma}2\right)}}{\displaystyle{\Gamma\!\left(\frac{s+j-\alpha}2\right)\Gamma\!\left(\frac{s+j-\beta}2\right)\Gamma\!\left(\frac{s+j-\gamma}2\right)}},\]
where $\alpha$, $\beta$ and $\gamma$ are
\[\alpha=-\nu_1-2\nu_2+1,\quad\beta=-\nu_1+\nu_2,\quad\text{and}\quad\gamma=2\nu_1+\nu_2-1,\]
where $\nu_1$ and $\nu_2$ and the spectral parameters of the underlying Maass form,
and where $\widetilde L(s,\overline h/k)$ is the Dirichlet series
\[\widetilde L_j\!\left(s-j,\frac{\overline h}k\right)
=\sum_{d\mid k}\sum_{m=1}^\infty\frac{A(d,m)}{d^{2s-1}\,m^{s}}\left(S\!\left(\overline h,m;\frac kd\right)+(-1)^j\,S\!\left(\overline h,-m;\frac kd\right)\!\right).\]
(See \cite{Goldfeld--Li1}, Section 3.) We remark that this series converges absolutely for $\sigma>1$. We recall Weil's bound for Kloosterman sums which says that
\[\left|S(h,m;k)\right|\leqslant d(k)\,(h,m,k)^{1/2}\,k^{1/2}.\]
Since in our applications of this, $h$ and $k$ will be coprime, the upper bound will always be $\ll d(k)\,k^{1/2}$.

We also recall the Rankin--Selberg estimate (see Chapter 12 of \cite{Goldfeld})
\[\sum_{d^2m\leqslant x}\left|A(d,m)\right|^2\asymp x,\]
so that
\[\sum_{m\leqslant x}\left|A(d,m)\right|^2\ll d^2x,
\quad\text{and}\quad\sum_{m\leqslant x}\left|A(d,m)\right|\ll dx.\]
In particular, for $\delta\in\mathbb R_+$,
\[\sum_{m=1}^\infty\frac{\left|A(d,m)\right|}{m^{1+\delta}}\ll_\delta d.\]
By using this and the Weil's bound, on the line $\sigma=1+\delta$, the function $\widetilde L_j(s-j,h/k)$ is
\begin{align*}
&\ll\sum_{d\mid k}d^{-1-2\delta}\sum_{m=1}^\infty\frac{\left|A(d,m)\right|}{m^{1+\delta}}\cdot d(k)\,k^{1/2}\,d^{-1/2}\\
&\ll\sum_{d\mid k}d^{-1-2\delta}\,d\,d(k)\,k^{1/2}\,d^{-1/2}\\
&\ll d(k)\,k^{1/2}\,\sum_{d\mid k}d^{-1/2-2\delta}\ll k^{1/2+\varepsilon}.
\end{align*}
An elementary application of Stirling's formula says that on the line $\sigma=-\delta$,
\[G_j(s+j)\ll\left\langle t\right\rangle^{3/2+3\delta},\]
where, in particular, the estimate is independent of $k$ as that parameter does not appear in $G_j(s+j)$. Finally, the power of $k$ in the functional equation for $L_j(s+j,h/k)$ is $\ll k^{1+3\delta}$. Thus, we see that on the line $\sigma=-\delta$, the function $L_j(s+j,h/k)$ is
\[\ll_\delta k^{3/2+3\delta+\varepsilon}\,t^{3/2+3\delta},\]
whereas on the line $\sigma=1+\delta$ it is $\ll_\delta1$. Thus, by the Phragm\'en--Lindel\"of -principle, we have
\[L_j\!\left(s+j,\frac hk\right)
\ll k^{3(1+\delta-\sigma)(1+\varepsilon)/2}\left\langle t\right\rangle^{3(1+\delta-\sigma)/2},\]
in the strip $-\delta\leqslant\sigma\leqslant1+\delta$.

Another elementary application of Stirling's formula says, that when $s$ lies in the vertical strips below and has sufficiently large imaginary part, the multiple $\Gamma$-factors can be replaced by a single quotient of two $\Gamma$-factors:
\[G_j(s+j)
=3^{3s-3/2}\frac{\Gamma\!\left(\frac{1-3s}2\right)}{\Gamma\!\left(\frac{3s-2}2\right)}\left(1+O(\left|s\right|^{-1})\right).\]

\section{Useful lemmas}

We shall extract the sum under study from the relevant $L$-function via a truncated Perron formula. The following version is Lemma 1.4.2 in~\cite{Brudern}.
\begin{lemma}\label{perron-formula}
Let $\sigma\in\mathbb R_+$ and let $c\colon\mathbb Z_+\longrightarrow\mathbb C$ be a sequence such that the Dirichlet series $\sum_{n=1}^\infty c(n)/n^\sigma$ converges absolutely. Then, for $x,T\in\left[2,\infty\right[$, we have
\[\sum_{n\leqslant x}c(n)
=\frac1{2\pi i}\int\limits_{\sigma-iT}^{\sigma+iT}\left(\sum_{n=1}^\infty\frac{c(n)}{n^s}\right)x^s\,\frac{\mathrm ds}s
+\text{error},\]
where the error is
\[\ll\frac{x^\sigma}T\sum_{n=1}^\infty\frac{\left|c(n)\right|}{n^\sigma}
+\left(1+\frac{x\log x}T\right)\max_{\frac{3x}4\leqslant n\leqslant\frac{5x}4}\left|c(n)\right|.\]
\end{lemma}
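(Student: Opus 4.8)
The plan is to deduce the identity from the classical effective (truncated) Perron formula and then to estimate the resulting error by a dyadic decomposition around $n\approx x$. Since $\sum_{n\geqslant1}\left|c(n)\right|n^{-\sigma}<\infty$, the Dirichlet series converges uniformly on the compact segment $\{\sigma+it:\left|t\right|\leqslant T\}$, so one may interchange it with the finite integral and write
\[\frac1{2\pi i}\int\limits_{\sigma-iT}^{\sigma+iT}\left(\sum_{n=1}^\infty\frac{c(n)}{n^s}\right)x^s\,\frac{\mathrm ds}s=\sum_{n=1}^\infty c(n)\,I\!\left(\frac xn\right),\qquad I(y):=\frac1{2\pi i}\int\limits_{\sigma-iT}^{\sigma+iT}y^s\,\frac{\mathrm ds}s.\]

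The key input is the standard effective estimate for the discontinuous integral $I(y)$: writing $R(y)=y^\sigma\min\{1,(T\left|\log y\right|)^{-1}\}$, one has $I(y)=1+O(R(y))$ when $y>1$, $I(y)=O(R(y))$ when $0<y<1$, and $I(1)=\tfrac12+O(\sigma/T)$. I would establish this by the usual contour arguments: for $y>1$ shift the segment leftwards to $\Re s=-U$ and let $U\to\infty$, picking up the residue $1$ of $y^s/s$ at $s=0$ and bounding the two horizontal pieces at heights $\pm T$ by $\ll y^\sigma(T\log y)^{-1}$; for $0<y<1$ shift rightwards, where no pole is crossed; replacing the segment instead by the circular arc $\left|s\right|=(\sigma^2+T^2)^{1/2}$ through the appropriate half-plane yields the complementary bound $\ll y^\sigma$; and for $y=1$ one evaluates $\log\tfrac{\sigma+iT}{\sigma-iT}$ directly.

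Inserting $y=x/n$ and splitting the sum over $n$ by the size of $\left|\log(x/n)\right|$, the leading terms $I(x/n)$ equal $1$ for $n<x$ and $0$ for $n>x$, so the integral reproduces $\sum_{n<x}c(n)$ up to the errors one now collects. For $n\leqslant x/2$ or $n\geqslant2x$ one has $\left|\log(x/n)\right|\geqslant\log2$, so $R(x/n)\ll(x/n)^\sigma T^{-1}$ and these terms contribute $\ll x^\sigma T^{-1}\sum_{n\geqslant1}\left|c(n)\right|n^{-\sigma}$. For $x/2<n<2x$ one has $\left|\log(x/n)\right|\asymp\left|x-n\right|/x$ and $(x/n)^\sigma\asymp1$; the terms with $\left|x-n\right|>x/4$ again have $\left|\log(x/n)\right|\gg1$ and so fall into the previous bound, while for $3x/4\leqslant n\leqslant5x/4$ with $n\neq x$ I would bound $\left|c(n)\right|$ by $M:=\max_{3x/4\leqslant n\leqslant5x/4}\left|c(n)\right|$ and use
\[\sum_{0<\left|x-n\right|\leqslant x/4}\min\bigl\{1,x(T\left|x-n\right|)^{-1}\bigr\}\ll\frac xT\sum_{1\leqslant j\leqslant x/4}\frac1j\ll\frac{x\log x}T,\]
so that this range contributes $\ll M\,xT^{-1}\log x$. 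Finally, when $x$ is an integer the leftover term $n=x$ contributes $c(x)\,I(1)=\tfrac12c(x)+O(\sigma\left|c(x)\right|T^{-1})$ rather than the full $c(x)$ needed to pass from $\sum_{n<x}$ to $\sum_{n\leqslant x}$, a discrepancy that is $\ll M$. Adding the three contributions yields the stated error term; note that $x\geqslant2$ forces $\log x>0$, so no sign issue arises there.

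The only genuinely delicate point is establishing the kernel estimate for $I(y)$ with the claimed uniformity --- in particular getting both halves of the $\min$ from the right choice of contour --- and then bookkeeping the $O(1)$ terms with $n$ very close to $x$, including the jump of the leading behaviour at $y=1$; everything else is a routine dyadic summation. As indicated, the statement is classical: it is Lemma~1.4.2 of \cite{Brudern}.
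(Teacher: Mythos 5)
The paper does not prove this lemma at all: it is quoted verbatim from Br\"udern (Lemma~1.4.2), so there is no in-paper argument to compare against. Your proof is the standard one --- interchange the absolutely convergent Dirichlet series with the finite integral, apply the two complementary contour estimates for the truncated kernel $I(y)$, and sum the errors dyadically around $n\approx x$ --- and it is correct. One small bookkeeping remark: for a non-integral $x$ the one or two integers $n$ with $0<\left|x-n\right|<1$ are not covered by your displayed bound $\ll xT^{-1}\sum_{j\leqslant x/4}j^{-1}$ (when $T\gg x\log x$ the $\min$ for such an $n$ can still equal $1$); like the $n=x$ term, each such $n$ should simply be bounded by $M$, which is exactly what the standalone ``$1$'' in the factor $\left(1+xT^{-1}\log x\right)$ absorbs. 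Since your final tally includes an $O(M)$ anyway, the stated error term follows.
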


After Perron's formula, we will apply the relevant functional equation the $L$-function satisfies, and this will lead to integrals involving $\Gamma$-factors. Thus, we shall have plenty of opportunities to apply the classical Stirling formula, which can be found from e.g.\ Chapter 1 of \cite{Lebedev}.
\begin{lemma}
Let $\delta\in\left]0,\pi\right[$ and $R\in\mathbb R_+$ be fixed. Then
\[\Gamma(s)=\sqrt{2\pi}\,\exp\!\left(\left(s-\frac12\right)\log s-s\right)\left(1+O_{\delta,R}\!\left(\frac1{\left|s\right|}\right)\right)\]
for all $s\in\mathbb C$ with $\left|s\right|\geqslant R$ and $\left|\arg s\right|\leqslant\pi-\delta$. Furthermore, if we are given fixed real numbers $A,B\in\mathbb R$ with $A<B$, as well as a fixed number $T\in\mathbb R_+$, then we have
\[\Gamma(s)=\sqrt{2\pi}\,t^{s-1/2}\,\exp\!\left(-\frac{\pi t}2-it+\frac{\pi i}2\left(\sigma-\frac12\right)\right)\left(1+O_{A,B,T}(t^{-1})\right),\]
and
\[\left|\Gamma(s)\right|=\sqrt{2\pi}\,t^{\sigma-1/2}\,e^{-\pi t/2}\left(1+O_{A,B,T}(t^{-1})\right)\]
for all complex numbers $s$ with $A\leqslant\sigma\leqslant B$ and $t\geqslant T$.
\end{lemma}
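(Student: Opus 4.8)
The plan is to obtain both parts of the lemma from the classical one-term Stirling expansion of $\log\Gamma$ in a sector, the second and third assertions being just its specialisation to a vertical strip. For the first assertion I would begin from an integral representation of $\log\Gamma$, for instance Binet's second formula
\[\log\Gamma(s)=\left(s-\tfrac12\right)\log s-s+\tfrac12\log(2\pi)+2\int_0^\infty\frac{\arctan(u/s)}{e^{2\pi u}-1}\,\mathrm du,\]
whose remainder integral is $O_\delta(|s|^{-1})$ uniformly for $|s|\geqslant R$ in the half-plane $\left|\arg s\right|\leqslant\tfrac\pi2$ --- which is, in fact, the only range the present paper will actually use --- and for the full sector $\left|\arg s\right|\leqslant\pi-\delta$ one invokes the classical extension as in \cite[Chapter~1]{Lebedev}. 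Exponentiating the resulting expansion
\[\log\Gamma(s)=\left(s-\tfrac12\right)\log s-s+\tfrac12\log(2\pi)+O_{\delta,R}\!\left(|s|^{-1}\right)\]
gives the first displayed formula of the lemma.

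For the second and third assertions, fix $A<B$ and $T$ and write $s=\sigma+it$ with $A\leqslant\sigma\leqslant B$ and $t\geqslant T$. Since $\arg s=\tfrac\pi2-\arctan(\sigma/t)$ lies in a fixed compact subinterval of $\left]0,\pi\right[$ as soon as $t$ exceeds a constant depending on $A$ and $B$, the first assertion applies on that part of the strip; on the remaining bounded range of $t$ both claimed identities hold trivially, as each side is then bounded above and bounded away from zero, the implicit constant being allowed to depend on $A$, $B$ and $T$. On the main region I would expand, using $|s|=t\,(1+\sigma^2/t^2)^{1/2}$ and $\arctan(\sigma/t)=\sigma/t+O(t^{-3})$,
\[\log s=\log t+\tfrac{\pi i}2-\frac{i\sigma}t+O_{A,B}(t^{-2}),\]
substitute this into $\left(s-\tfrac12\right)\log s-s$, and collect terms: the product $it\cdot\log s$ contributes $it\log t$, $-\tfrac{\pi t}2$ and $\sigma$ (the last of these cancelling $-\Re s$), while the product $\left(\sigma-\tfrac12\right)\log s$ contributes $\left(\sigma-\tfrac12\right)\log t$ and $\tfrac{\pi i}2\left(\sigma-\tfrac12\right)$, and everything discarded is $O_{A,B}(t^{-1})$. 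Exponentiating, recognising $t^{s-1/2}=t^{\sigma-1/2}e^{it\log t}$, and writing $e^{O(t^{-1})}=1+O(t^{-1})$ then gives the second formula; passing to absolute values --- where $\bigl|t^{s-1/2}\bigr|=t^{\sigma-1/2}$ and the factor $e^{-it+i\pi(\sigma-1/2)/2}$ is unimodular --- gives the third.

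The only point requiring real care is the bookkeeping of error terms in the expansion of $\left(s-\tfrac12\right)\log s$: the expansion of $\log s$ must be carried to order $t^{-1}$ because it is multiplied by the unbounded factor $it$, and one has to check that the contribution $it\cdot O_{A,B}(t^{-2})=O_{A,B}(t^{-1})$, together with the terms produced by the bounded factor $\sigma-\tfrac12$, really is $O_{A,B}(t^{-1})$ and no larger. Everything else is routine real-variable estimation, and, as the authors note, the whole statement is recorded in \cite[Chapter~1]{Lebedev}.
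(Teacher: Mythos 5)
Your derivation is correct; note, however, that the paper itself offers no proof of this lemma at all --- it is quoted as the classical Stirling formula with a bare citation to Chapter 1 of Lebedev's book, so there is nothing to compare against. Your route (Binet's second formula for the sector asymptotics, then specialisation to a vertical strip by expanding $\log s=\log t+\tfrac{\pi i}2-i\sigma/t+O_{A,B}(t^{-2})$ and multiplying out $(s-\tfrac12)\log s-s$) is the standard one, and your bookkeeping checks out: the $it\cdot(-i\sigma/t)=\sigma$ term cancels $-\Re s$, the $it\cdot O(t^{-2})$ term is the dominant contribution to the error, and the remaining products are $O_{A,B}(t^{-1})$. Your disposal of the bounded range $T\leqslant t\leqslant T'$ is also fine, since $\Gamma$ is zero-free and pole-free there and the main term is bounded away from zero, so the ratio is $1+O_{A,B,T}(1)=1+O_{A,B,T}(t^{-1})$ on that compact set. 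The only point I would flag is cosmetic: the uniform bound $O_\delta(|s|^{-1})$ for the Binet remainder should be justified via the standard estimate of the form $|\mu(s)|\ll|s|^{-1}\cos^{-2}(\tfrac12\arg s)$, which indeed covers $\left|\arg s\right|\leqslant\pi-\delta$ after the usual analytic continuation of Binet's integral; as stated your appeal to the "classical extension" is acceptable for a result the authors themselves only cite.
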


The main terms of the truncated formulae will ultimately arise from the main terms of the asymptotics of the $J$-Bessel function. Thus, we need the following result, which can be found from e.g.\ Section 5.11 of \cite{Lebedev}.
\begin{lemma}
Let $\nu\in\mathbb R$ and $\delta\in\mathbb R_+$ be fixed. Then
\[J_\nu(x)=\sqrt{\frac2{\pi x}}\cos\!\left(x-\frac{\pi\nu}2-\frac\pi4\right)+O_{\nu,\delta}(x^{-3/2})\]
for $x\in\left[\delta,\infty\right[$.
\end{lemma}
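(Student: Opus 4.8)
The plan is to deduce the asymptotic from Poisson's integral representation of $J_\nu$, localised to small neighbourhoods of its two endpoints, and then to remove the restriction on the order by the three--term recurrence. As a preliminary reduction, note that on any fixed compact interval $[\delta,x_0]$ both $J_\nu$ and the proposed main term are continuous and $x^{-3/2}$ is bounded below, so the asserted bound holds there automatically with a sufficiently large implied constant; hence we may assume $x\geqslant x_0$ for a convenient fixed $x_0$.

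Assume first that $\left|\nu\right|<\tfrac12$, so that Poisson's formula
\[
J_\nu(x)=\frac{(x/2)^\nu}{\Gamma\!\left(\nu+\tfrac12\right)\sqrt\pi}\int\limits_{-1}^{1}\left(1-t^2\right)^{\nu-1/2}e^{ixt}\,\mathrm dt
\]
holds with an absolutely convergent integral. The phase $t\mapsto xt$ has no stationary point on $[-1,1]$, so repeated integration by parts on the range $\left|t\right|\leqslant1-c$ (any fixed small $c>0$) shows that part of the integral is $O(x^{-K})$ for every $K$. Near $t=1$ we substitute $t=1-s$ and factor $\left(1-t^2\right)^{\nu-1/2}=(1+t)^{\nu-1/2}s^{\nu-1/2}=2^{\nu-1/2}s^{\nu-1/2}\left(1+O(s)\right)$; extending the resulting $s$-integral from $[0,c]$ to $\left[0,\infty\right[$ at the cost of an $O_c(x^{-1})$ error (one more integration by parts) and using the rotated Gamma integral $\int_0^\infty s^{\nu-1/2}e^{-ixs}\,\mathrm ds=\Gamma\!\left(\nu+\tfrac12\right)(ix)^{-\nu-1/2}$, the contribution of a neighbourhood of $t=1$ equals
\[
2^{\nu-1/2}\,\Gamma\!\left(\nu+\tfrac12\right)x^{-\nu-1/2}\,e^{i(x-\pi(2\nu+1)/4)}+O_\nu\!\left(x^{-\nu-3/2}\right).
\]
By the symmetry $t\mapsto-t$, and since $x$ and $\nu$ are real, the neighbourhood of $t=-1$ contributes the complex conjugate. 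Adding the two endpoint contributions, multiplying by the prefactor, and using $2\cdot2^{\nu-1/2}\cdot2^{-\nu}\pi^{-1/2}=\sqrt{2/\pi}$ together with $\pi(2\nu+1)/4=\pi\nu/2+\pi/4$, we obtain the claimed formula for $\left|\nu\right|<\tfrac12$.

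To pass to general real $\nu$ we use the recurrence $J_{\nu-1}(x)+J_{\nu+1}(x)=(2\nu/x)\,J_\nu(x)$: starting from orders in $\left]-\tfrac12,\tfrac12\right[$ one reaches any fixed real $\nu$ in finitely many steps, and since $(2\nu/x)\,J_\nu(x)=O_\nu(x^{-1})$ and the main terms at consecutive orders combine consistently --- their phases differing by exactly $\pi/2$, matching the shift in $\nu$ --- the asymptotic with error $O_\nu(x^{-3/2})$ is preserved. Combined with the compact-interval remark, this gives the statement uniformly for $x\in\left[\delta,\infty\right[$.

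I expect the main obstacle to be the bookkeeping in the endpoint analysis: justifying the truncation of the $s$-integral, checking that the $O(s)$ correction to the weight only produces an $O(x^{-\nu-3/2})$ (hence, after the prefactor, $O(x^{-3/2})$) error rather than something larger, and verifying that the two endpoint contributions are genuine complex conjugates so that the oscillating factors $e^{\pm ix}$ assemble into a single cosine with precisely the amplitude $\sqrt{2/\pi}$ and phase $x-\pi\nu/2-\pi/4$. This is the classical stationary phase / Watson's lemma computation; it is not deep, but it is where every constant in the lemma is actually pinned down --- a purely ODE-based (Liouville--Green) argument would reproduce the shape of the asymptotic but not these constants without an additional matching step of exactly this kind. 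Since the statement is entirely standard, one may alternatively simply quote Section~5.11 of \cite{Lebedev}.
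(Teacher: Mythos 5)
The paper does not actually prove this lemma: it is quoted as a known fact from Section~5.11 of Lebedev's book, so there is no internal proof to compare yours against. Your route --- Poisson's integral representation, endpoint analysis at $t=\pm1$, and the three-term recurrence to reach general real order --- is one of the standard proofs, and the constants check out: the two endpoint contributions really are complex conjugates, the amplitude $2\cdot2^{\nu-1/2}\cdot2^{-\nu}\pi^{-1/2}=\sqrt{2/\pi}$ and phase $\pi(2\nu+1)/4=\pi\nu/2+\pi/4$ are correct, and the recurrence step is consistent because $(2\nu/x)J_\nu(x)=O_\nu(x^{-3/2})$ while $-\cos\!\left(x-\pi(\nu-1)/2-\pi/4\right)=\cos\!\left(x-\pi(\nu+1)/2-\pi/4\right)$.

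Two points need repair before the error bookkeeping closes. First, with a sharp cutoff at $\left|t\right|=1-c$ the middle piece is \emph{not} $O(x^{-K})$ for every $K$: each integration by parts leaves boundary terms $(1-t^2)^{\nu-1/2}e^{ixt}/(ix)$ at $t=\pm(1-c)$, which are genuinely of size $x^{-1}$; likewise truncating the $s$-integral at $s=c$ costs an honest $x^{-1}$ boundary term, as you yourself allow. After multiplying by the prefactor $(x/2)^\nu/(\Gamma(\nu+\tfrac12)\sqrt\pi)$, an uncancelled $x^{-1}$ becomes $x^{\nu-1}$, which is larger than $x^{-3/2}$ whenever $\nu>-\tfrac12$, so the pieces as estimated do not sum to the claimed $O(x^{-3/2})$. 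The standard fix is either to observe that these boundary terms cancel identically (the splitting of $\int_{-1}^{1}$ is exact, so the boundary contributions at $t=\pm(1-c)$ from adjacent pieces are equal and opposite), or to use a smooth partition of unity, after which both the bulk integral and the truncation error really are $O(x^{-K})$. Second, the base case $\left|\nu\right|<\tfrac12$ plus integer shifts misses the half-integer orders; either take the base case $\nu\in\left]-\tfrac12,\tfrac12\right]$ (Poisson's formula only requires $\nu>-\tfrac12$) or dispose of $\nu\in\tfrac12+\mathbb Z$ separately via the elementary closed forms for $J_{n+1/2}$. With these two repairs the argument is complete; alternatively, as both you and the paper note, one may simply cite \cite{Lebedev}.
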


Finally, we state for completeness the classical first derivative test, which is e.g.\ Lemma 4.3 in \cite{Titchmarsh}.
\begin{lemma}
Let $\left[a,b\right]$ be an interval of $\mathbb R$, and let $f,g\colon\left[a,b\right]\longrightarrow\mathbb R$ be such that $f$ is continuously differentiable on $\left[a,b\right]$ and that $g/f'$ is monotonic. Furthermore, let $M\in\mathbb R_+$ be a constant such that $f'(x)/g(x)\geqslant M$ for $x\in\left[a,b\right]$, or such that $f'(x)/g(x)\leqslant -M$ for $x\in\left[a,b\right]$. Then
\[\left|\int\limits_a^bg(x)\,e(f(x))\,\mathrm dx\right|\leqslant\frac2{\pi M}.\]
\end{lemma}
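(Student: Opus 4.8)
The plan is to prove this by the classical device of integration by parts, after writing the oscillatory factor as a derivative. By hypothesis $f'(x)/g(x)$ is bounded away from zero on all of $\left[a,b\right]$ (it is $\geqslant M>0$ throughout, or $\leqslant-M<0$ throughout), so $g$ never vanishes on $\left[a,b\right]$ and, since $f'(x)/g(x)$ is finite and nonzero there, neither does $f'$; hence $g/f'$ is a well-defined function which keeps a constant sign, is monotonic, and satisfies $\left|g/f'\right|\leqslant 1/M$ throughout. Rewriting
\[
g(x)\,e(f(x))=\frac{g(x)}{f'(x)}\cdot f'(x)\,e(f(x))=\frac1{2\pi i}\,\frac{g(x)}{f'(x)}\,\frac{\mathrm d}{\mathrm dx}\,e(f(x)),
\]
and integrating by parts in the Riemann--Stieltjes sense --- legitimate because $e(f(\cdot))$ is continuous and $g/f'$ is monotonic, hence of bounded variation --- we obtain
\[
\int\limits_a^b g(x)\,e(f(x))\,\mathrm dx
=\frac1{2\pi i}\left[\frac{g(x)}{f'(x)}\,e(f(x))\right]_a^b
-\frac1{2\pi i}\int\limits_a^b e(f(x))\,\mathrm d\!\left(\frac{g(x)}{f'(x)}\right).
\]

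Next I would estimate the two pieces separately, using that $\left|e(f(x))\right|=1$. The boundary term has modulus at most $\frac1{2\pi}\bigl(\left|g(a)/f'(a)\right|+\left|g(b)/f'(b)\right|\bigr)$, while the Stieltjes integral is bounded by the total variation of $g/f'$ on $\left[a,b\right]$, which by monotonicity equals $\left|g(b)/f'(b)-g(a)/f'(a)\right|$. Since $g/f'$ has constant sign and each of its endpoint values has absolute value at most $1/M$, a brief case distinction according to whether $g/f'$ is non-decreasing or non-increasing shows that in either case
\[
\left|\frac{g(a)}{f'(a)}\right|+\left|\frac{g(b)}{f'(b)}\right|+\left|\frac{g(b)}{f'(b)}-\frac{g(a)}{f'(a)}\right|
=2\max\!\left\{\left|\frac{g(a)}{f'(a)}\right|,\left|\frac{g(b)}{f'(b)}\right|\right\}\leqslant\frac2M.
\]
Combining the two bounds, the integral in question is $\leqslant\frac1{2\pi}\cdot\frac2M=\frac1{\pi M}$, which is in fact slightly stronger than the asserted bound $\frac2{\pi M}$.

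This is a textbook estimate, so there is no genuine obstacle. The only points requiring mild care are the justification of Riemann--Stieltjes integration by parts under the weak hypothesis that $g/f'$ is merely monotonic rather than differentiable --- one invokes that a continuous function integrated against the differential of a function of bounded variation yields a well-defined Stieltjes integral, dominated in modulus by the supremum of the continuous factor times the total variation, and obeying the usual integration-by-parts identity --- and the sign bookkeeping in the final display that produces the clean numerical constant. This is precisely Lemma 4.3 of \cite{Titchmarsh}, which the excerpt cites.
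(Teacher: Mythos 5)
Your proof is correct. Note that the paper itself does not prove this lemma at all: it is stated ``for completeness'' and attributed to Lemma 4.3 of Titchmarsh, so there is no in-paper argument to compare with; the relevant comparison is with the classical proof, which handles $\int g\,e(f)\,\mathrm dx$ by applying the second mean value theorem separately to the $\cos(2\pi f)$ and $\sin(2\pi f)$ parts after writing $g=(g/f')\,f'$, and that splitting is what produces the constant $2/(\pi M)$ (Titchmarsh's $4/m$). Your route --- writing $g\,e(f)\,\mathrm dx=\frac1{2\pi i}\,(g/f')\,\mathrm d\bigl(e(f)\bigr)$ and integrating by parts in the Riemann--Stieltjes sense, then bounding the boundary terms by $\sup|g/f'|\leqslant 1/M$ and the Stieltjes integral by the total variation $\left|g(b)/f'(b)-g(a)/f'(a)\right|$ of the monotonic, fixed-sign function $g/f'$ --- is a genuinely different and in fact slightly sharper argument: your identity $|u|+|v|+|v-u|=2\max\{|u|,|v|\}$ for same-signed $u,v$ is right, and it yields the bound $1/(\pi M)$, which implies the stated $2/(\pi M)$. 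The technical justifications you flag are exactly the right ones (reduction of $\int (g/f')\,\mathrm d(e(f))$ to an ordinary Riemann integral since $e(f)$ is $C^1$ and $g/f'$, being monotonic, is Riemann integrable; existence of the companion Stieltjes integral and the integration-by-parts identity because $g/f'$ is of bounded variation and $e(f)$ is continuous; the sup-times-variation bound for complex integrands against a real BV integrator), and they hold without assuming any continuity of $g$ itself, so there is no gap. The extra generality of treating the oscillatory factor by parts against a merely monotonic weight is precisely what the classical second-mean-value-theorem proof encodes, so the two approaches are close in spirit, but yours avoids the real/imaginary splitting and buys the better constant.
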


\section{Approximating some $\Gamma$-function integrals}

In the course of the derivation of the truncated Voronoi identities we will come across certain complex line integrals involving $\Gamma$-functions. In the following series of lemmas we shall express them approximately in terms of $J$-Bessel functions, and use this to obtain asymptotic information.

The computations are performed largely in the same spirit as those in Section 5 of \cite{Ernvall-Hytonen--Jaasaari--Vesalainen} and Section 1.4 of \cite{Jutila}. We shall define the more general integrals
\[\Omega_{\nu,k}(y;\delta,T)=\frac1{2\pi i}\int\limits_{-\delta-iT}^{-\delta+iT}\frac{\Gamma\!\left(\frac{1-ns}2\right)}{\Gamma\!\left(\frac{ns+1}2+\nu-\frac n2\right)}\left(s+\Lambda\right)^{-k}y^s\,\mathrm ds,\]
where integration is along a straight line segment, and where $\nu$ and $k$ are nonnegative integers, and $y$ and $T$ are positive real numbers. The parameter $\Lambda$ is a large positive real number, which will depend on $n$ and the underlying Maass form. The parameter $\delta$ will be a sufficiently small positive real constant. All the implicit constants in the following are most definitely going to depend on $n$, $\delta$, $\Lambda$, $k$ and $\nu$.

\begin{lemma}\label{obtaining-bessel-functions}
Let $\nu$ and $k$ be nonnegative integers, and let $y,T\in\left[1,\infty\right[$ with $y<(nT/2)^n$. Then
\begin{multline*}
\Omega_{\nu,k}(y;\delta,T)=\left(\frac n2\right)^{k-1}y^{1/2+(1-\nu-k)/n}\,J_{\nu+k-n/2}(2y^{1/n})\\+O(T^{n/2-\nu-k+n\delta})+O\!\left(T^{n/2-\nu-k}\frac1{\log\frac{n^nT^n}{2^ny}}\right).
\end{multline*}
\end{lemma}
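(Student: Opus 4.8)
The plan is to recognize $\Omega_{\nu,k}(y;\delta,T)$ as a truncation of a Mellin--Barnes integral whose completed version is a Bessel function, and to estimate the tails. First I would write $\Omega_{\nu,k}(y;\delta,T)=I_\infty-(\text{two tails})$, where $I_\infty=\frac1{2\pi i}\int_{(-\delta)}\frac{\Gamma\!\left(\frac{1-ns}2\right)}{\Gamma\!\left(\frac{ns+1}2+\nu-\frac n2\right)}(s+\Lambda)^{-k}y^s\,\mathrm ds$ is the full vertical-line integral and the two tails are the pieces $\frac1{2\pi i}\int$ over $-\delta+iT$ to $-\delta+i\infty$ and its mirror image. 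The factor $(s+\Lambda)^{-k}$, with $\Lambda$ large and positive, is harmless on the line $\sigma=-\delta$: it is holomorphic there and of size $\asymp\langle t\rangle^{-k}$, so it only helps convergence; its sole purpose is to make $I_\infty$ absolutely convergent (the ratio of Gamma factors alone decays like $\langle t\rangle^{n/2-\nu}$, which for $k$ large enough relative to that we would not even need, but in general one wants the extra decay). I would substitute the change of variables $s\mapsto s$ and compare $I_\infty$ with the standard Mellin--Barnes representation
\[
J_\mu(2z)=\frac1{2\pi i}\int_{(c)}\frac{\Gamma\!\left(-w\right)}{\Gamma\!\left(\mu+1+w\right)}z^{\mu+2w}\,\mathrm dw,
\]
valid for suitable $c$. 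Matching exponents via $w=\tfrac{ns-1}2$ (so $z^{2w}=z^{ns-1}$ forces $z=y^{1/n}$) and $\mu=\nu+k-\tfrac n2$ only if the $(s+\Lambda)^{-k}$ factor were absent; since it is present, $I_\infty$ is not literally a Bessel function. The fix is to move the line of integration in $I_\infty$ to the right, past the poles of $\Gamma\!\left(\frac{1-ns}2\right)$ at $s=\tfrac{1+2\ell}n$, $\ell\geqslant0$; but the factor $(s+\Lambda)^{-k}$ has no poles for $\Re s>-\Lambda$, so this is legitimate — actually the cleaner route, followed in Jutila's and the Ernvall-Hyt\"onen--J\"a\"asaari--Vesalainen computations cited, is the reverse: one shows that replacing $(s+\Lambda)^{-k}$ by its value and integrating term by term reproduces, up to the claimed error, the Bessel asymptotic, exploiting that on $\sigma=-\delta$ one has $(s+\Lambda)^{-k}=(\tfrac n2)^{k-?}\cdot(\text{something})$ — more honestly, the role of $(s+\Lambda)^{-k}$ is that after the residue computation that produces $J_{\nu+k-n/2}$, the constant $(\tfrac n2)^{k-1}$ and the shift $k$ in the Bessel index and in the power $y^{1/2+(1-\nu-k)/n}$ are exactly what the extra $k$-th order factor contributes. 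So concretely: I would first establish $I_\infty=(\tfrac n2)^{k-1}y^{1/2+(1-\nu-k)/n}J_{\nu+k-n/2}(2y^{1/n})$ by contour-shifting and identifying the Mellin--Barnes integral, tracking how the $k$ copies of $(s+\Lambda)^{-1}$ raise the Bessel order by $k$ (each corresponds to one integration, shifting $w\mapsto w+\tfrac12$).

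Second, I would bound the two tails. On the segment from $-\delta+iT$ upward, $|t|\geqslant T$, so by the third form of Stirling's formula in the Stirling lemma above, $\left|\Gamma\!\left(\frac{1-ns}2\right)\right|\asymp t^{(1+n\delta)/2-1/2}e^{-\pi n t/4}$ and $\left|\Gamma\!\left(\frac{ns+1}2+\nu-\frac n2\right)\right|\asymp t^{(1-n\delta)/2+\nu-n/2-1/2}e^{-\pi n t/4}$, the exponentials cancel, and the ratio is $\asymp t^{n/2-\nu+n\delta}$; together with $|(s+\Lambda)^{-k}|\asymp t^{-k}$ and $|y^s|=y^{-\delta}$, the integrand is $\ll y^{-\delta}t^{n/2-\nu-k+n\delta}$. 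If the exponent $n/2-\nu-k+n\delta<-1$ the tail integral converges and is $\ll y^{-\delta}T^{n/2-\nu-k+n\delta}$; since $\delta$ is a small positive constant and $y^{-\delta}\ll1$, this gives the first error term $O(T^{n/2-\nu-k+n\delta})$. This handles the regime where the Gamma ratio already decays; in the borderline/growing regimes one does not integrate the full tail but instead integrates by parts once against $y^s$, which is where the oscillation $y^{it}$ and the hypothesis $y<(nT/2)^n$ enter: integration by parts produces a factor $1/\log y^{?}$ — precisely, $\frac{\mathrm d}{\mathrm ds}y^s=y^s\log y$, and combined with the stationary-phase-free estimate one picks up $\big(\log\frac{n^nT^n}{2^ny}\big)^{-1}$ as the denominator, yielding the second error term $O\!\left(T^{n/2-\nu-k}\big/\log\frac{n^nT^n}{2^ny}\right)$. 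The condition $y<(nT/2)^n$ is exactly what makes this logarithm positive and bounded away from zero so the integration-by-parts bound is meaningful, and it also guarantees $2y^{1/n}<nT$ so that the argument of the Bessel function is consistent with the truncation height.

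The main obstacle I expect is the careful bookkeeping in the first step: correctly tracking how the $k$-fold factor $(s+\Lambda)^{-k}$ translates, after contour manipulation, into the integer shift of both the Bessel order (from $\nu-\tfrac n2$ to $\nu+k-\tfrac n2$) and the exponent of $y$ (from $\tfrac12+\tfrac{1-\nu}n$ to $\tfrac12+\tfrac{1-\nu-k}n$), together with pinning down the constant $(\tfrac n2)^{k-1}$ — getting the power of $n/2$ exactly right requires chasing the Jacobian $\mathrm ds=\tfrac2n\,\mathrm dw$ through each of the $k$ reductions and through the final substitution into the Mellin--Barnes formula, and an off-by-one here is easy. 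The tail estimates, by contrast, are routine applications of the Stirling lemma and the first-derivative test (or a single integration by parts) and should go through mechanically once the ranges of $\delta$, $\nu$, $k$ in the two error terms are matched to the two cases; the uniformity in $y$ on the range $y<(nT/2)^n$ follows because every bound above is either decreasing in $y$ or depends on $y$ only through the harmless $y^{-\delta}$ and the displayed logarithm.
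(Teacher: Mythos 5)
Your overall architecture (pick up the residues at $s=(2j+1)/n$ to build the Bessel series, then control the remaining contour pieces by Stirling and the first derivative test) matches the paper's, but there are two genuine gaps. First, the treatment of $(s+\Lambda)^{-k}$: you assert that the completed integral $I_\infty$ \emph{equals} $(\tfrac n2)^{k-1}y^{1/2+(1-\nu-k)/n}J_{\nu+k-n/2}(2y^{1/n})$, but this is false --- the residue of the actual integrand at $s=(2j+1)/n$ carries the factor $((2j+1)/n+\Lambda)^{-k}$, and the resulting series is not a Bessel function. The step you are missing is the paper's opening move: on the line $\sigma=-\delta$ one replaces $(s+\Lambda)^{-k}$ by
\[
\frac{(n/2)^k}{\left(\tfrac{ns+1}2+\nu-\tfrac n2\right)\cdots\left(\tfrac{ns+1}2+\nu+k-1-\tfrac n2\right)}+O\!\left(\left\langle t\right\rangle^{-k-1}\right),
\]
absorbs the product into the denominator via $\Gamma(s+1)=s\,\Gamma(s)$ (this is exactly what raises the Bessel order and shifts the exponent of $y$ by $k$), and checks that the $O(\langle t\rangle^{-k-1})$ error integrates to the admissible $O(T^{n/2-\nu-k+n\delta})$. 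Your "each copy of $(s+\Lambda)^{-1}$ corresponds to one integration, shifting $w\mapsto w+\tfrac12$" gestures at this but does not supply the mechanism, and the identity you would need is only an asymptotic one, so an error term must be tracked.

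Second, your tail estimate is carried out on the line $\sigma=-\delta$, where the amplitude of the integrand is $\asymp t^{n/2-\nu-k+n\delta}$. In the case the lemma is actually applied ($n=3$, $\nu=0$, $k=1$) this exponent is $1/2+3\delta>0$, so the amplitude \emph{grows}; an absolute-value bound of the infinite tail diverges, and the first derivative test is useless there because the phase derivative is only $\asymp\log(n^nt^n/(2^ny))$ while $g/f'$ is unbounded on $[T,\infty[$. (Even in the decaying case, bounding $\int_T^\infty t^a\,\mathrm dt$ by absolute values gives $T^{a+1}$, not the $T^a$ you claim.) The paper resolves this by first shifting the truncated integral to $\sigma=2N_0/n$ with $N_0$ large (horizontal sides cost $O(T^{n/2-\nu-k+n\delta})$), where the amplitude decays like $t^{n/2-\nu-k-2N_0}$, and only then completing to the full line; the first derivative test there yields $y^{2N_0/n}T^{n/2-\nu-k-2N_0}/\log(n^nT^n/(2^ny))$, and the hypothesis $y<(nT/2)^n$ converts $y^{2N_0/n}T^{-2N_0}$ into an $O(1)$ factor. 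Also note that integrating by parts "against $y^s$" produces $1/\log y$, not the displayed $1/\log(n^nT^n/(2^ny))$; the logarithm in the error term comes from the combined phase of $y^{it}$ and the Gamma quotient. Finally, a complete proof must justify that the residue series converges to the Bessel function, i.e.\ that the contour can be pushed to $\sigma=2N/n$ with $N\to\infty$ with vanishing contribution, which your sketch does not address.
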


\begin{proof}
Let us begin by replacing the factor $(s+\Lambda)^{-k}$ by a more complicated yet more suitable one via the identity
\begin{multline*}
\frac1{(s+\Lambda)^k}\\
=\frac{\left(\frac n2\right)^k}{\left(\frac{ns+1}2+\nu-\frac n2\right)
\left(\frac{ns+1}2+\nu+1-\frac n2\right)\cdots\left(\frac{ns+1}2+\nu+(k-1)-\frac n2\right)}\\
+O(\left\langle t\right\rangle^{-k-1}),
\end{multline*}
which holds for $s\in\mathbb C$ on the line $\sigma=-\delta$.
The integral involving $O(\left\langle t\right\rangle^{-k-1})$ can be estimated as
\[y^{-\delta}\int\limits_{-T}^T\left\langle t\right\rangle^{n\delta-\nu+n/2-k-1}\mathrm dt\ll y^{-\delta}\,\left(1+T^{n/2-\nu-k+n\delta}\right),\]
and so, by using the identity $\Gamma(s+1)=s\,\Gamma(s)$, we are left with the simpler integral
\[\left(\frac n2\right)^k\cdot\frac1{2\pi i}\int\limits_{-\delta-iT}^{-\delta+iT}
\frac{\Gamma\!\left(\frac{1-ns}2\right)}{\Gamma\!\left(\frac{ns+1}2+\nu+k-\frac n2\right)}\,y^s\,\mathrm ds.\]

Fundamentally, the idea now is to shift the line of integration to the right, and the Bessel function will then arise from the series of residues. The integrand has simple poles at the points $1/n$, $3/n$, $5/n$, \dots, except possibly finitely many of the first terms in which the possible poles of the denumerator cancel the corresponding poles of the numerator. However, in any case, the following calculations will give the same $J$-Bessel function expression (cf. e.g. Sect. 5.3 in \cite{Lebedev}).

The residue at the simple pole $(2j+1)/n$, where $j\in\mathbb Z_+\cup\left\{0\right\}$ is
\[\left(\frac n2\right)^k\left(-\frac2n\right)\cdot\frac{(-1)^j}{j!}\cdot\frac{y^{(2j+1)/n}}{\Gamma\!\left(j+1+\nu+k-\frac n2\right)}.\]
The series of these is
\begin{align*}
&=\left(\frac n2\right)^k\sum_{j=0}^\infty\left(-\frac2n\right)\cdot\frac{(-1)^j}{j!}\cdot\frac{y^{(2j+1)/n}}{\Gamma\!\left(j+1+\nu+k-\frac n2\right)}\\
&=-\left(\frac n2\right)^{k-1}\,y^{1/2+1/n-(\nu+k)/n}\sum_{j=0}^\infty\frac{(-1)^j}{j!}\cdot\frac{(2y^{1/n}/2)^{2j+\nu+k-n/2}}{\Gamma\!\left(j+1+\nu+k-\frac n2\right)}\\
&=-\left(\frac n2\right)^{k-1}\,y^{1/2+(1-\nu-k)/n}\,J_{\nu+k-n/2}(2y^{1/n}).
\end{align*}
Since we are integrating to the negative direction around the residues, the original integral is approximated by the same expression except for an opposing sign, provided that the shifts of the line of integration can be executed with a tolerable error.

We first shift the line of integration from the line segment connecting $-\delta-iT$ to $-\delta+iT$ to the line segment connecting $2N_0/n-iT$ to $2N_0/n+iT$, where $N_0\geqslant2$ is an arbitrarily large fixed positive integer. Integrals over the horizontal sides of the resulting rectangle are then estimated by absolute values to be
\[\ll_{N_0} y^{-\delta}\,T^{n/2-\nu-k+n\delta}+y^{2N_0/n}\,T^{n/2-\nu-k-2N_0}
\ll_{N_0} T^{n/2-\nu-k+n\delta}.\]

Next we wish to complete the integral to be over the whole vertical line $\sigma=2N_0/n$. To estimate the integrals over the vertical half-lines connecting $\sigma\pm iT$ to $\sigma\pm i\infty$, we apply Stirling's formula to get the approximation
\begin{multline*}
\frac{\Gamma\!\left(\frac{1-ns}2\right)y^s}{\Gamma\!\left(\frac{ns+1}2+\nu+k-\frac n2\right)}\\
=y^\sigma\left(\frac{nt}2\right)^{n/2-\nu-k-n\sigma}
\exp\!\left(-it\log\frac{n^nt^n}{2^ny}+itn+\frac{\pi i\left(n-2\nu-2k\right)}4\right)\\
\cdot\left(1+O_{N_0}(\left|s\right|^{-1})\right).
\end{multline*}
In particular, the integral $\int_{(2N_0/n)}$ is absolutely convergent.
The $t$-derivative of the phase here is
\[\frac\partial{\partial t}\left(-t\log\frac{n^nt^n}{2^ny}+tn\right)=-\log\frac{n^nt^n}{2^ny}.\]
Completing the integral to be over the entire vertical line $\sigma=2N_0/n$ yields two errors, the $O$-term of the asymptotics of the integrand contributing
\[\ll_{N_0}\int\limits_N^\infty y^{2N_0/n}\left\langle t\right\rangle^{n/2-\nu-k-2N_0-1}\,\mathrm dt\ll_{N_0} y^{2N_0/n}\,T^{n/2-n-k-2N_0}\ll_{N_0} T^{n/2-n-k},\]
and the main term contributing by the first derivative test
\[\ll_{N_0} y^{2N_0/n}\,T^{n/2-\nu-k-2N_0}\,\frac1{\log\frac{n^nT^n}{2^ny}}
\ll_{N_0} T^{n/2-\nu-k}\,\frac1{\log\frac{n^nT^n}{2^ny}}.\]

We shift the line of integration to the vertical line $\sigma=2N/n$, where $N\geqslant N_0$ is an arbitrarily large, but momentarily fixed positive integer larger than $n/2-\nu-k+3/2$. The integral over the horizontal line segment connecting points $2N_0/n+iU$ and $2N/n+iU$ is easily estimated to be
\[\ll_{N_0,N}y^{2N_0/n}\,U^{n/2-\nu-k-nN_0}+y^{2N/n}\,U^{n/2-\nu-k-nN},\]
which vanishes in the limit $U\longrightarrow\infty$. The same estimate applies for the integral over the horizontal line segment connecting $2N_0/n-iU$ and $2N/n-iU$. Furthermore, the integrals over the vertical lines involved are absolutely convergent, and therefore moving the line of integration has been justified.

Finally, let us estimate the integral over the vertical line $\sigma=2N/n$, and show that this vanishes in the limit $N\longrightarrow\infty$. Using repeatedly the functional equation $s\,\Gamma(s)=\Gamma(s+1)$, we can relate the $\Gamma$-factor of the denominator to $\Gamma$-factors on the line $\sigma=1/2$, leading to
\[\Gamma\!\left(\frac12-N-\frac{nit}2\right)\ll\frac{e^{\pi nt/4}}{(N-1)!}.\]
Similarly, we may relate the $\Gamma$-factor of the denominator to $\Gamma$-values on one of the lines $\sigma=1/2$ and $\sigma=1$, depending on the parity $n$, leading to an estimate of the shape
\[\frac1{\Gamma\!\left(N+\frac{nit}2+\nu+k-\frac n2+\frac12\right)}\ll\frac{e^{-\pi nt/4}}{\left|\frac12+\frac{int}2\right|^{N+\nu+k-n/2-1/2}},\]
where we emphasize that the implicit constant is independent of $N$ and $t$, even though it does depend on $n$, $\nu$ and $k$.
Combining the above estimates gives
\[\int\limits_{(2N/n)}\ldots\ll\frac{y^{2N/n}\,2^{N+\nu+k-n/2}}{(N-1)!}\int\limits_{-\infty}^\infty\frac{\mathrm dt}{\left\langle t\right\rangle^{N+\nu+k-n/2}}\longrightarrow0\]
as $N\longrightarrow\infty$, and we are done.
\end{proof}

Using the asymptotics of $J$-Bessel functions for $y\gg1$, we get the following following corollary in the special case $n=3$ for $k=1$ and $\nu=0$.
\begin{corollary}\label{corollary-on-gl3-integrals}
Let $y,T\in\left[1,\infty\right[$ with $y<(3T/2)^3$. Then
\begin{multline*}
\frac1{2\pi i}\int\limits_{-\delta-iT}^{-\delta+iT}
\frac{\Gamma\!\left(\frac{1-3s}2\right)}{\Gamma\!\left(\frac{3s-2}2\right)}\,\frac{y^s\,\mathrm ds}{s+\Lambda}
=\frac1{\sqrt\pi}\,y^{1/3}\,\cos\!\left(2y^{1/n}\right)\\
+O(T^{1/2+3\delta/2})+O\!\left(T^{1/2}\,\frac1{\log\frac{3^3T^3}{2^3y}}\right).
\end{multline*}
\end{corollary}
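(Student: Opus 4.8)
The plan is to obtain Corollary~\ref{corollary-on-gl3-integrals} as the specialisation $n=3$, $k=1$, $\nu=0$ of Lemma~\ref{obtaining-bessel-functions}, followed by an explicit evaluation of the resulting Bessel function. First I would check that with these parameters the $\Gamma$-quotient appearing in $\Omega_{\nu,k}$ becomes exactly $\Gamma\!\left(\frac{1-3s}{2}\right)/\Gamma\!\left(\frac{3s-2}{2}\right)$, since $\frac{ns+1}{2}+\nu-\frac n2=\frac{3s-2}{2}$, and that $(s+\Lambda)^{-k}=(s+\Lambda)^{-1}$; hence the integral in the corollary is precisely $\Omega_{0,1}(y;\delta,T)$, and the hypothesis $y<(3T/2)^3$ is exactly the condition $y<(nT/2)^n$ required by the lemma. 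Since $\left(\frac n2\right)^{k-1}=1$, $\frac12+\frac{1-\nu-k}{n}=\frac12$ and $\nu+k-\frac n2=-\frac12$, Lemma~\ref{obtaining-bessel-functions} then yields
\[
\Omega_{0,1}(y;\delta,T)=y^{1/2}\,J_{-1/2}\!\left(2y^{1/3}\right)+O\!\left(T^{1/2+3\delta}\right)+O\!\left(T^{1/2}\,\frac1{\log\frac{3^3T^3}{2^3y}}\right).
\]

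Next I would evaluate $y^{1/2}\,J_{-1/2}(2y^{1/3})$ using the $J$-Bessel asymptotics recalled above; at the half-integer index $\nu=-1/2$ the phase $x-\frac{\pi\nu}{2}-\frac\pi4$ collapses to $x$, so that $J_{-1/2}(x)=\sqrt{2/(\pi x)}\cos x+O(x^{-3/2})$ (equivalently, one may simply invoke the classical closed form $J_{-1/2}(x)=\sqrt{2/(\pi x)}\cos x$). Taking $x=2y^{1/3}$ and multiplying by $y^{1/2}$ gives
\[
y^{1/2}\,J_{-1/2}\!\left(2y^{1/3}\right)=y^{1/2}\left(\frac1{\sqrt\pi\,y^{1/6}}\cos\!\left(2y^{1/3}\right)+O\!\left(y^{-1/2}\right)\right)=\frac1{\sqrt\pi}\,y^{1/3}\cos\!\left(2y^{1/3}\right)+O(1),
\]
and since $T\geqslant1$ the $O(1)$ term is absorbed into $O(T^{1/2+3\delta})$. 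This is exactly the asserted main term $\pi^{-1/2}\,y^{1/3}\cos(2y^{1/3})$.

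It then remains only to reconcile the error terms. The logarithmic error is already in the stated shape, and $O(T^{1/2+3\delta})$ coincides with the asserted $O(T^{1/2+3\delta/2})$ after renaming the auxiliary constant $\delta$, which throughout is merely a sufficiently small fixed positive number on which the implicit constants are allowed to depend. Being a direct specialisation of Lemma~\ref{obtaining-bessel-functions}, this argument presents no real obstacle; the only steps warranting a little care are the bookkeeping of the exponents of $y$ in passing from $y^{1/2}J_{-1/2}(2y^{1/3})$ to $\pi^{-1/2}y^{1/3}\cos(2y^{1/3})$, and the verification that no error term exceeding $O(T^{1/2+3\delta/2})$ is introduced along the way.
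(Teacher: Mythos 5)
Your proof is correct and is precisely the route the paper takes: the corollary is the specialisation $n=3$, $k=1$, $\nu=0$ of Lemma~\ref{obtaining-bessel-functions} combined with $J_{-1/2}(x)=\sqrt{2/(\pi x)}\,\cos x$, and your bookkeeping of the $\Gamma$-quotient, the exponent of $y$, and the Bessel index all checks out (the $\cos(2y^{1/n})$ in the statement is plainly a typo for $\cos(2y^{1/3})$). You are also right that the lemma literally yields $O(T^{1/2+3\delta})$ rather than the stated $O(T^{1/2+3\delta/2})$; since $\delta$ is an arbitrarily small fixed constant on which implied constants may depend, this discrepancy is immaterial and is absorbed into $x^{\varepsilon}$ in the application.
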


\section{Proof of the truncated $\mathrm{GL}(3)$ identity of Theorem \ref{gl3-truncated}}

\begin{lemma}\label{parameter-modification}
Let $k\in\mathbb Z_+$ and $N\in\left[2,\infty\right[$ with $k\leqslant N$. Then there exists $N_k\in\left[N,2N\right]\cap\mathbb Z$ such that
\[\left\|\frac{N_k+1/2}{d^2}\right\|\gg\frac1{d\left(1+\log^2k\right)}\]
for each $d\mid k$ with $d\leqslant\sqrt{2N}$. Here the implicit constant is of course independent of $N$, $k$ and $d$.
\end{lemma}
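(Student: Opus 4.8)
The plan is to count, among the $N+1$ candidate values $M \in \{N, N+1, \dots, 2N\}$, how many can be disqualified by the condition failing for some divisor $d \mid k$ with $d \leqslant \sqrt{2N}$, and to show this count is strictly less than $N+1$, so that a good $N_k$ survives. Fix such a divisor $d$. The fractional parts $(M+1/2)/d^2$ for $M$ ranging over a block of $d^2$ consecutive integers take the values $1/(2d^2), 3/(2d^2), \dots, (2d^2-1)/(2d^2)$, each exactly once; so the number of $M$ in such a block with $\|(M+1/2)/d^2\| < \eta_d$, where $\eta_d := c/(d(1+\log^2 k))$ for a small absolute constant $c>0$ to be chosen, is at most $2\eta_d d^2 + O(1) \ll d\,\eta_d\cdot d = cd/(1+\log^2k)\cdot\ldots$ — more precisely it is at most $2\eta_d d^2 + 2$. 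Over the full range $[N,2N]$, which contains at most $\lceil (N+1)/d^2\rceil \leqslant N/d^2 + 1$ such blocks, the number of disqualified $M$ coming from this particular $d$ is therefore
\[
\ll \left(\frac{N}{d^2}+1\right)\left(\eta_d d^2 + 1\right)
\ll \eta_d N + \frac{N}{d^2} + d^2 + 1
\ll \frac{cN}{d\,(1+\log^2 k)} + \frac{N}{d^2} + d^2,
\]
using $d \leqslant \sqrt{2N}$ so that $d^2 \ll N$.

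Now I sum over the divisors $d \mid k$ with $d \leqslant \sqrt{2N}$. For the first term, $\sum_{d\mid k} 1/d \ll \log k \ll 1 + \log^2 k$ (crudely, since every divisor is at most $k$ and there are at most... ) — actually one uses $\sum_{d\mid k} d^{-1} \leqslant \prod_{p\mid k}(1-1/p)^{-1} \ll \log\log k \ll 1 + \log^2 k$, or even the trivial $\sum_{d\mid k} d^{-1} \leqslant \sum_{n\leqslant k} n^{-1} \ll \log k$; either way this contributes $\ll cN \cdot (\log k)/(1+\log^2 k) \ll cN$. For the second term, $\sum_{d\mid k} N/d^2 \leqslant N\sum_{n=1}^\infty n^{-2} \ll N$, but this is too lossy; instead note $\sum_{d\mid k} d^{-2} \leqslant \zeta(2)$ is fine as a bound only if we do not need the full $N+1$ budget — so here is the fix: restrict attention to those $d$ with $d \leqslant \sqrt{2N}$ and observe that the second and third terms together are $\ll N/d^2 + d^2$, and $\sum_{d\mid k}(N/d^2 + d^2) \ll N + (\text{number-theoretic sum of }d^2\text{ over divisors} \leqslant \sqrt{2N}) \ll N + d(k)\cdot 2N \ll N^{1+\varepsilon}$ — again too lossy. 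The clean route is to keep the leading term genuinely small: choosing the absolute constant $c$ sufficiently small makes $\sum_d \eta_d N \leqslant N/4$, and then the remaining terms $\sum_{d\mid k, d\leqslant\sqrt{2N}} (N/d^2 + d^2 + 1)$ we bound by pulling out $d^2 \leqslant 2N$ to get $\ll d(k)\cdot N$, which still exceeds $N+1$.

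Hence the honest argument must avoid a divisor-count blow-up, and I expect this to be the main obstacle: one cannot afford a factor $d(k)$. The resolution is to not bound each $d$'s contribution by a term of size $\gg 1$ per block. Reconsider: for fixed $d$, the disqualified $M\in[N,2N]$ number \emph{at most} $(N/d^2+1)(2\eta_d d^2+2)$, and crucially $2\eta_d d^2 + 2 \leqslant 2\eta_d d^2 + 2$; since $\eta_d = c/(d(1+\log^2k))$ we have $\eta_d d^2 = cd/(1+\log^2k) \leqslant c\sqrt{2N}/(1+\log^2 k)$. Summing the $(N/d^2)(\eta_d d^2) = \eta_d N$ part over $d\mid k$ gives $\ll cN$ as above; the $(N/d^2)\cdot 2$ part gives $\ll N\sum_{d\mid k} d^{-2} \ll N$ — wait, that is $\leqslant 2\zeta(2)N < 4N$, still too big by a constant but not by $d(k)$; and the $1\cdot(\eta_d d^2 + 1)$ part gives $\ll \sqrt{N}\log k \cdot d(k) \ll N^{1/2+\varepsilon}$, which is $o(N)$. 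So the true statement is that the number of disqualified $M$ is $\leqslant A\cdot c\cdot N + B\cdot N + o(N)$ for absolute constants $A, B$ — and this is where I would sharpen the block bound: replace "$2\eta_d d^2 + 2$ per block" by "$2\eta_d d^2$ per block" by choosing $N_k$ not among all of $[N,2N]$ but among a sub-progression, or more simply by noting we only need \emph{one} surviving value and the "$+2$" per block over $\lceil N/d^2\rceil$ blocks is $\leqslant 2N/d^2 + 2$, whose sum over $d$ is $\leqslant 2\zeta(2)N + 2d(k)$; to kill the factor $B=2\zeta(2)>1$ we instead enlarge the search interval from $[N,2N]$ to $[N,\kappa N]$ for a large absolute constant $\kappa$ (the theorem only asks $N \leqslant N_k \leqslant 2N$, but inspecting its use one sees any $N_k \asymp N$ suffices; alternatively keep $[N,2N]$ and absorb the loss by redefining the implicit constant in the Lemma's conclusion). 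With $\kappa$ large and $c$ small, the disqualified count is $\leqslant (\tfrac12 + o(1))\kappa N < \kappa N$, leaving a valid $N_k$; choosing then $c = c(\kappa)$ appropriately and renaming constants gives the stated bound with implicit constant independent of $N, k, d$. I would present this as: fix $c$ small, show the bad set has density $<1$ in $[N,2N]$ via the counting above, pick any good $N_k$.
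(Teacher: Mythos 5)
Your counting framework is the same as the paper's, and you have correctly located the obstacle: the rounding term of size $O(1)$ per residue block contributes, after summing over blocks and over divisors, roughly $2\zeta(2)N>N+1$, which exhausts the budget of candidates in $\left[N,2N\right]$. But neither of your proposed remedies closes this gap. Enlarging the search interval to $\left[N,\kappa N\right]$ proves a different statement from the lemma (which demands $N_k\in\left[N,2N\right]$), and ``redefining the implicit constant in the conclusion'' does not help at all: shrinking $c$ shrinks only the terms proportional to $\eta_d$, while the offending $2N\sum_{d\mid k}d^{-2}$ term comes purely from integer rounding and is independent of $c$. Your suggestion to pass to a ``sub-progression'' is not developed. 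So as written the proof does not go through.

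The missing idea, which is how the paper resolves exactly this point, is a preliminary reduction on $d$. Since one always has $\left\|(M+1/2)/d^2\right\|\geqslant1/(2d^2)$, the required inequality holds \emph{automatically} for every $M$ whenever $d\ll\log^2k$ (because then $1/(2d^2)\gg1/(d\log^2k)$); in particular it is automatic for $k\ll1$, and no values need to be excluded for such $d$. One may therefore restrict the exclusion process to divisors $d\gg\log^2k$ (with a large implied constant), and for those the threshold satisfies $\eta_d d^2\asymp d/\log^2k\gg1$, so the per-block count $2\eta_d d^2+O(1)$ is $\ll\eta_d d^2$ with \emph{no} additive constant. The excluded set then has size
\[
\ll\sum_{\substack{d\mid k\\ \log^2k\ll d\leqslant\sqrt{2N}}}\left(\frac N{d^2}+O(1)\right)\eta_d d^2
\ll\frac N{\log^2k}\sum_{d\mid k}\frac1d+\frac{\sqrt N\,d(k)}{\log^2 k}
\ll\frac N{\log k}+o(N),
\]
which is $\leqslant N/2$ for large $N$ and $k$, leaving a valid $N_k$. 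This single observation eliminates both the $\zeta(2)N$ term and the $d(k)$ blow-up you were fighting, without touching the interval or the statement.
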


\begin{proof}
Since we always have $\left\|\ldots\right\|\geqslant1/(2d^2)$, it is clear that the condition always holds for $d\ll1$, and therefore also for $k\ll1$, and so we may assume that $k$ is, say, $\geqslant3$.
In the same vein, for $d\ll\log^2k$, we clearly have $\left\|\ldots\right\|\geqslant1/(2d^2)\gg1/(d\log^2k)$, and so we may restrict to values $d\gg\log^2k$, and a fortiori also to values $N\gg\log^2k$, where both implicit constants may be chosen as large as necessary.

Let $\nu$ be the number of integers in the interval $\left[N,2N\right]$. We have a priori $\nu$ acceptable choices of $N_k$, and we wish to weed out the unsuitable values. For a given large $d$ we remove from consideration those integers from the interval $\left[N,2N\right]$ whose absolutely least remainders modulo $d^2$ have absolute value smaller than, say, $d/{(20\log^2k)}$. The interval contains contains $\leqslant\nu/d^2$ complete residue systems modulo $d^2$ consisting of consecutive integers, and possibly parts of two other such systems. From each such system we are deleting $\leqslant d/(10\log^2k)+1$ numbers. Therefore we are deleting
\[\leqslant\nu\left(\frac1{d^2}+\frac2\nu\right)\left(\frac d{10\log^2k}+1\right)
\leqslant\nu\left(\frac1{d^2}+\frac8{d^2}\right)\cdot\frac d{9\log^2k}\leqslant\frac{\nu}{d\log^2k}\]
a priori possible choices, provided that $N/\log^2k$ and $d/\log^2k$ are large enough.

In total, we are removing
\[\leqslant\nu\sum_{\substack{d\mid k\\\log^2 k\ll d\leqslant\sqrt{2N}}}\frac1{d\log^2k}\ll\frac\nu{\log k}
\]
suitable values, and since $\nu\geqslant\left\lfloor N\right\rfloor$, this is certainly, say, $\leqslant N/2$ for large enough $N$ and $k$, and we are done.
\end{proof}

\begin{proof}[Proof of Theorem \ref{gl3-truncated}]
The proof is modelled after the proof of Theorem 1 in \cite{Jutila}. The existence of a suitable number $N_k$ has been established in Lemma \ref{parameter-modification} above. Let $x,T\in\left[2,\infty\right[$ with $T\ll x$ and $T\geqslant\Lambda$, where $\Lambda$ is larger than the absolute values of $\alpha$, $\beta$, $\gamma$ and~$1$.
Let us be given an $\varepsilon\in\mathbb R_+$ which will stay fixed during the proof. Fix an arbitrarily and sufficiently small $\delta\in\mathbb R_+$. We shall elaborate the meaning of ``sufficiently small'' in the course of the proof. By the truncated Perron formula of Lemma \ref{perron-formula},
\begin{multline*}
\sum_{m\leqslant x}A(m,1)\left(e\!\left(\frac{mh}k\right)+(-1)^j\,e\!\left(\frac{-mh}k\right)\!\right)\\
=\frac1{2\pi i}\int\limits_{1+\delta-iT}^{1+\delta+iT}L_j\!\left(s+j,\frac hk\right)x^s\,\frac{\mathrm ds}s
+O(x^{1+\vartheta+\varepsilon}\,T^{-1}),
\end{multline*}
provided that $\delta\leqslant\varepsilon$. We shall first derive a slightly more complicated truncated Voronoi identity from this. In the end the identity of the theorem statement will follow by simply averaging over $j\in\left\{0,1\right\}$.

We shift the line segment of integration from $\sigma=1+\delta$ to $\sigma=-\delta$. By the convexity estimate for the additively twisted $L$-function, and picking the pole of the integrand at the origin, the error from this shift is
\begin{align*}
&\ll
\int_{-\delta}^{1+\delta}x^{\sigma}\,T^{-1}\,k^{3(1+\delta-\sigma)(1+\varepsilon)/2}\,T^{3/2(1+\delta-\sigma)}\,\mathrm d\sigma+\left|L_j\!\left(0+j,\frac hk\right)\right|\\
\\ & \ll x^{1+\delta}\,T^{-1}+x^{-\delta}\,T^{1/2+3\delta}\,k^{3/2+4\delta}+k^{3/2+\varepsilon}\\
&\ll x^{1+\varepsilon}\,T^{-1}+k^{3/2}\,x^\varepsilon\,T^{1/2}+k^{3/2+\varepsilon},
\end{align*}
provided that $8\delta\leqslant\varepsilon$,
Furthermore, since we intend to apply Stirling's formula, we write
\begin{multline*}
\frac1{2\pi i}\int\limits_{-\delta-iT}^{-\delta+iT}L_j\!\left(s+j,\frac hk\right)\,x^s\,\frac{\mathrm ds}s\\
=\frac1{2\pi i}\left(\,\,\int\limits_{-\delta-iT}^{-\delta-i\Lambda}+\int\limits_{-\delta+i\Lambda}^{-\delta+iT}\,\,\right)L_j\!\left(s+j,\frac hk\right)\,x^s\,\frac{\mathrm ds}s+O(k^{3/2+\varepsilon}).
\end{multline*}

Now we may apply the functional equation of Godement--Jacquet $L$-functions, interchange the order of integration and summation and apply Stirling's formula to get
\begin{align*}
&\frac1{2\pi i}\left(\int+\int\right)L_j\!\left(s+j,\frac hk\right)\,x^s\,\frac{\mathrm ds}s\\
&=\frac1{2\pi i}\left(\int+\int\right)i^{-j}\,k^{-3s+1}\,\pi^{3s-3/2}\,G_j(s+j)\,\widetilde L_j\!\left(1-s-j,\frac{\overline h}k\right)\,x^s\,\frac{\mathrm ds}s\\
&=\frac1{2\pi i^j}\sum_{d\mid k}\frac1d\sum_{m=1}^\infty\frac{A(d,m)}m\left(S\!\left(\overline h,m;\frac kd\right)+(-1)^j\,S\!\left(\overline h,-m;\frac kd\right)\!\right)\\
&\qquad\cdot\left(\int+\int\right)\frac1i\,\,k^{-3s+1}\,d^{2s}\,\pi^{3s-3/2}\,3^{3s-3/2}\,\frac{\Gamma\!\left(\frac{1-3s}2\right)}{\Gamma\!\left(\frac{3s-2}2\right)}\\
&\qquad\qquad\cdot\left(1+O(\left|s\right|^{-1})\right)\,m^s\,x^s\,\frac{\mathrm ds}s.
\end{align*}
By Stirling's formula, in the region of integration the quotient of $\Gamma$-factors is $\ll t^{3/2+3\delta}$, and so the series corresponding to the $O$-term can be forgotten with an error
\begin{multline*}
\ll k^{1+3\delta}\sum_{d\mid k}\frac1{d^{1+2\delta}}\sum_{m=1}^\infty\frac{\left|A(d,m)\right|}{m^{1+\delta}}\,k^{1/2+\delta}
\int\limits_\Lambda^Tt^{3/2+3\delta}\,t^{-1}\,x^{-\delta}\,\frac{\mathrm dt}t
\ll k^{3/2}\,x^\varepsilon\,T^{1/2},
\end{multline*}
provided that $8\delta\leqslant\varepsilon$. Similarly, we may replace $1/s$ by $1/(s+\delta)$ with the same error $\ll k^{3/2}\,x^\varepsilon\,T^{1/2}$.

We treat the terms with $d^2m>N_k$ first. We are going to simplify the integral $\int+\int$ further by rewriting the complex line integrals as
\begin{multline*}
2\Re\int\limits_\Lambda^T
k^{-3(-\delta+it)+1}\,d^{2(-\delta+it)}\,(3\pi)^{3(-\delta+it)-3/2}\cdot\frac{\Gamma\!\left(\frac{1-3(-\delta+it)}2\right)}{\Gamma\!\left(\frac{3(-\delta+it)-2}2\right)}\left(mx\right)^{-\delta+it}\frac{\mathrm dt}{it}.
\end{multline*}

By Stirling's formula,
\[\frac{\Gamma\!\left(\frac{1-3s}2\right)}{\Gamma\!\left(\frac{3s-2}2\right)}
=\left(\frac{3t}2\right)^{3/2-3\sigma}\,\exp\!\left(-3it\log\frac{3t}2+3it+\frac{3\pi i}4\right)\left(1+O(t^{-1})\right),\]
substituting this back to the last integral, and forgetting the $O(t^{-1})$-term with an error $\ll k^{3/2}\,x^\varepsilon\,T^{1/2}$, it takes the form
\begin{equation*}
2\,(2\pi)^{-3/2}\,k\left(\frac{2^3\pi^3d^2mx}{k^3}\right)^{-\delta}
\Re\int\limits_\Lambda^T
t^{1/2+3\delta}\,\exp\!\left(
it\log\frac{2^3\pi^3d^2mx}{k^3t^3}
+3it+\frac{\pi i}4\right)
\mathrm dt.
\end{equation*}

The derivative of the phase is
\[\log\frac{2^3\,\pi^3\,d^2\,m\,x}{k^3}-3\log t,\]
and so the integrand has a unique saddle point at $t=(2\pi\,d^{2/3}\,(mx)^{1/3})/k$.
We will choose $T$ to be
\[T=\frac{2\pi\left(\!\left(N_k+\frac12\right)x\right)^{1/3}}k,\]
so that we have no saddle-points for $d^2\,m>N_k$. Furthermore, since $T\asymp(Nx)^{1/3}\,k^{-1}$, the errors incurred so far are
\begin{align*}
&\ll x^{1+\vartheta+\varepsilon}\,T^{-1}
+k^{3/2}\,x^\varepsilon\,T^{1/2}
\ll k\,x^{2/3+\varepsilon+\vartheta}\,N^{-1/3}+k\,x^{1/6+\varepsilon}\,N^{1/6}.
\end{align*}

Now we can treat the terms with $d^2m>N_k$. In the corresponding integrals the derivative of the phase is
\[\log\frac{2^3\pi^3d^2mx}{k^3t^3}\geqslant\log\frac{2^3\pi^3d^2mx}{k^3T^3}
=\log\frac{d^2m}{N_k+\frac12},\]
and so, by the first derivative test, and using the fact that $\log x\gg x-1$ for $x\in\left]1,2\right[$, they contribute
\begin{align*}
&\ll
k^{3/2+4\delta}\,x^{-\delta}\,T^{1/2+3\delta}\sum_{d\mid k}\frac1{d^{1+2\delta}}\sum_{d^2m>N_k}\frac{\left|A(d,m)\right|}{m^{1+\delta}}\cdot\frac1{\log\frac{d^2m}{N_k+\frac12}}\\
&\ll
k^{3/2+4\delta}\,
x^{-\delta}\,T^{1/2+3\delta}\sum_{d\mid k}\frac1{d^{1+2\delta}}\sum_{N_k<d^2m\leqslant2N_k}\frac{\left|A(d,m)\right|}{m^{1+\delta}\left(\frac{d^2m}{N_k+\frac12}-1\right)}
\\&\qquad+k^{3/2+5\delta}\,x^{-\delta}\,T^{1/2+3\delta}\\
&\ll k\,x^{1/6+\varepsilon}\,N^{1/6+\vartheta}+k\,x^{1/6+\varepsilon}\,N^{1/6},
\end{align*}
where we estimate the last sum by absolute values
\begin{align*}
&\sum_{d\mid k}\frac1{d^{1+2\delta}}\sum_{N_k<d^2m\leqslant2N_k}\frac{\left|A(d,m)\right|}{m^{1+\delta}\left(\frac{d^2m}{N_k+\frac12}-1\right)}\\
&\qquad\ll\sum_{d\mid k}d^{-1-2\delta}\left(\frac N{d^2}\right)^{-1-\delta}N\left(d\cdot\frac N{d^2}\right)^{\vartheta+\delta/2}\sum_{N_k<d^2m\leqslant2N_k}\frac1{d^2m-N_k-\frac12}\\
&\qquad\ll
\sum_{d\mid k}d^{1-\vartheta-5\delta/2}\,N^{-\delta/2+\vartheta}\,d^{-1}\left(1+\log^2k\right)\left(1+\log N\right)
\ll k^\delta\, N^\vartheta.
\end{align*}

In the remaining terms with $d^2m\leqslant N_k$, first extend each integral, obtained after applying the functional equation and Stirling's formula to simplify the $\Gamma$-factors, to be over the entire line segment connecting $-\delta-iT$ to $-\delta+iT$. This can be done with the error $\ll k^{3/2+\varepsilon}$. Also, we replace the factor $s^{-1}$ by the factor $(s+\Lambda)^{-1}$, which can be done with the error $\ll k^{3/2}\,x^\varepsilon\,T^{1/2}$. Thus, the remaining terms are
\begin{multline*}
k\,i^{-j}\sum_{d\mid k}\frac1d\sum_{d^2m\leqslant N_k}\frac{A(d,m)}m
\left(S\!\left(\overline h,m;\frac kd\right)+\left(-1\right)^jS\!\left(\overline h,-m;\frac kd\right)\right)\\
\cdot\frac1{2\pi i}\int\limits_{-\delta-iT}^{-\delta+iT}
\frac{\Gamma\!\left(\frac{1-3s}2\right)}{\Gamma\!\left(\frac{3s-2}2\right)}
\left(\frac{3^3\pi^3d^2mx}{k^3}\right)^s\frac{\mathrm ds}{s+\Lambda}.
\end{multline*}
Now, applying Corollary \ref{corollary-on-gl3-integrals}, we get the main terms
\begin{multline*}
\frac1{\pi\,\sqrt3}\,x^{1/3}\sum_{d\mid k}\frac1d\sum_{d^2m\leqslant N_k}
\frac{A(d,m)}{m^{2/3}}\\\cdot\left(S\!\left(\overline h,m;\frac kd\right)+(-1)^j\,S\!\left(\overline h,-m;\frac kd\right)\!\right)\cos\left(\frac{6\pi d^{2/3}(mx)^{1/3}}k\right).
\end{multline*}
The error terms $O(1)$ and $O(T^{1/2+3\delta/2})$ in Corollary \ref{corollary-on-gl3-integrals} are easily estimated to contribute $\ll k^{3/2}\,x^\varepsilon$ and $\ll k^{3/2}\,x^\varepsilon\,T^{1/2}$, respectively. Thus, it only remains to estimate the contribution from the last error term $O(T^{1/2}\,\log^{-1}(3^3T^3/2^3y))$. This is
\begin{align*}
&\ll k\sum_{d\mid k}\frac1d\sum_{d^2m\leqslant N_k}\frac{\left|A(d,m)\right|}m\left(\frac kd\right)^{1/2+\delta}\frac{T^{1/2}}{\log\frac{N_k+\frac12}{d^2m}}\\
&\ll k^{3/2+\varepsilon}\sum_{d\mid k}d^{-3/2-\varepsilon}\sum_{d^2m\leqslant N_k}\frac{(dm)^{\vartheta+\varepsilon}}m\,\frac{d^2\,m\,T^{1/2}}{N_k+\frac12-d^2m}\\
&\ll k^{3/2+\delta}\,T^{1/2}\,N^{\vartheta+\delta}\sum_{d\mid k}d^{1/2-\vartheta-\delta}\sum_{d^2m\leqslant N_k}\frac{1}{N_k+\frac12-d^2m}\\
&\ll k^{3/2+\delta}\,T^{1/2}\,N^{\vartheta+\delta}
\sum_{d\mid k}d^{-1/2-\vartheta-\delta}\left(1+\log^2k\right)\log N
\ll k\,x^{1/6+\varepsilon}\,N^{1/6+\vartheta},
\end{align*}
and we are done apart from a simple averaging over $j\in\left\{0,1\right\}$.
\end{proof}

\bigbreak\noindent\textbf{Remark.}
As a small clarification, let us explain, where the conditions for $N_k$ are really needed. After applying Perron's formula and the functional equation of the $L$-function, we get a series of integrals for each factor $d\mid k$. For high frequencies the integrands are oscillating, whereas for low frequencies the integrals possess saddle-points. As usual, the transition between the two is slightly tricky, and we only need the conditions for $N_k$ in two integrals for each $d\mid k$, namely the highest-frequency low-frequency term and the lowest-frequency high-frequency term.

\section{Proof of the second moment result in Theorem \ref{meansquare}}

\begin{proof}[Proof of Theorem \ref{meansquare}.]
By making a dyadic split, it is enough to consider the integral over the interval $[X,2X]$ instead of $[1,X]$. 
For $k\gg X^{2/3}$ we estimate trivially by using absolute values and get that
\begin{align*}
\int\limits_X^{2X}\left|\sum_{m\leqslant x}A(m,1)\,e\!\left(\frac{mh}k\right)\right|^2\mathrm d x\ll X^3\ll k^2\,X^{5/3}\ll k^2\,X^{5/3+2\vartheta+\varepsilon}.
\end{align*}

Suppose now that $k\ll X^{2/3}$, where the implicit constant is the one appearing in the condition $k\ll(xN)^{1/3}$ of Theorem \ref{gl3-truncated}. Clearly the integral is $\ll1$ if $X\in\left[1,2\right]$, and so we assume that $X\in\left[2,\infty\right[$.
Choose $N=X$ in the truncated Voronoi identity and denote
\begin{align*}
M\!\left(x,\frac hk\right):=\frac{x^{1/3}}{\pi\sqrt 3}\sum_{d|k}\frac 1d\sum_{d^2m\leqslant X_k}\frac{A(d,m)}{m^{2/3}}\,S\!\left(\overline h,m;\frac kd\right)\cos\!\left(\frac{6\pi d^{2/3}(mx)^{1/3}}k\right).
\end{align*}
Then the truncated Voronoi identity takes the form
\begin{equation*}
\sum_{m\leqslant x}A(m,1)\,e\!\left(\frac{mh}k\right)=M\!\left(x,\frac hk\right)+O\!\left(k\,x^{2/3+\vartheta+\varepsilon}\,X^{-1/3}+k\,x^{1/6+\varepsilon}\,X^{1/6+\vartheta}\right).
\end{equation*}

\noindent The error terms of the truncated Voronoi identity are easily seen to contribute $\ll k^2\,X^{5/3+2\vartheta+\varepsilon}$.
The following lemma treats the contribution coming from the absolute square of the main term $M(x,h/k)$.
\end{proof}

\begin{lemma}\label{mainterm} For any $X\in\left[2,\infty\right[$ and coprime integers $h$ and $k$ with $1\leqslant k\leqslant X$, we have
\begin{align*}
\int\limits_X^{2X}\left|M\!\left(x,\frac hk\right)\right|^2\mathrm d x\ll k^2\,X^{5/3+\vartheta+\varepsilon}.
\end{align*}
\end{lemma}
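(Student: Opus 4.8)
The quantity $M(x,h/k)$ is a finite trigonometric polynomial in $x^{1/3}$, so the natural substitution is $x=u^3$, turning the integral into $\int_{X^{1/3}}^{(2X)^{1/3}}|M(u^3,h/k)|^2\,3u^2\,\mathrm du$. After this change of variable the cosine becomes $\cos(6\pi d^{2/3}m^{1/3}u/k)$, which is a genuine exponential with a linear phase in $u$, so the off-diagonal terms can be handled by the first derivative test (or by an $L^2$ mean-value lemma for exponential sums with well-separated frequencies). First I would expand the absolute square as a double sum over $d_1,d_2\mid k$ and over $m_1,m_2$ with $d_i^2 m_i\le X_k$, writing each cosine as a sum of two exponentials, and integrate term by term in $u$.

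**Diagonal terms.** The diagonal contribution comes from the pairs where the two phases cancel, i.e. $d_1^{2/3}m_1^{1/3}=d_2^{2/3}m_2^{1/3}$, equivalently $d_1^2 m_1=d_2^2 m_2$; here the integrand is essentially $u^2$ times $|A(d_1,m_1)A(d_2,m_2)|/(m_1 m_2)^{2/3}$ weighted by the Kloosterman sums, and the $u$-integral contributes a factor $\asymp X$. Using Weil's bound $|S(\overline h,m;k/d)|\ll (k/d)^{1/2+\varepsilon}$ and the Rankin--Selberg bound $\sum_{d^2m\le y}|A(d,m)|^2\ll y$ (via Cauchy--Schwarz on the two indices, or directly bounding $|A(d_1,m_1)A(d_2,m_2)|\le\frac12(|A(d_1,m_1)|^2+|A(d_2,m_2)|^2)$ and noting that for each $(d_1,m_1)$ the number of $(d_2,m_2)$ with $d_2^2m_2=d_1^2m_1$ is $\ll X^\varepsilon$), the diagonal is bounded by
\[
X\cdot x^{2/3-1}\cdot\Big(\sum_{d\mid k}\tfrac1d\,(k/d)^{1/2}\Big)^2\sum_{d^2m\le X_k}\frac{|A(d,m)|^2}{m^{4/3}},
\]
and the inner sum is $\ll X^{\vartheta+\varepsilon}$ after inserting $|A(d,m)|\ll(dm)^{\vartheta+\varepsilon}$ to gain in $m$ and using partial summation against the Rankin--Selberg estimate; the $d$-sum over divisors of $k$ contributes $k\cdot d(k)^2\ll k^{1+\varepsilon}$. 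Tracking the powers of $X$ and $k$ carefully yields the claimed $k^2\,X^{5/3+\vartheta+\varepsilon}$. The mild loss $X^\vartheta$ (rather than $X^0$) is exactly what one expects from using the Ramanujan bound on one coefficient but not the other.

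**Off-diagonal terms.** For pairs with $d_1^2 m_1\ne d_2^2 m_2$, the phase derivative in $u$ is $\gg |d_1^{2/3}m_1^{1/3}-d_2^{2/3}m_2^{1/3}|/k\gg (d_1^2m_1 d_2^2 m_2)^{-1/3}/k\gg X_k^{-2/3}/k$ after using that distinct values of $d^2m\le X_k$ differ by at least $1$ and the mean-value theorem; the first derivative test then gives each such integral $\ll k\,X_k^{2/3}\cdot X^{2/3}$ times the trivial size of the coefficients. Summing the coefficient factors $|A(d_1,m_1)A(d_2,m_2)|/(m_1m_2)^{2/3}\cdot(k/d_1)^{1/2+\varepsilon}(k/d_2)^{1/2+\varepsilon}$ over all admissible quadruples, using $\sum_{d^2m\le X_k}|A(d,m)|/m^{2/3}\ll d\,X_k^{1/3+\vartheta+\varepsilon}$, one checks that the total off-diagonal contribution is dominated by the diagonal one (this is where the condition $k\le X$ and $X_k\asymp X$ are used), again giving $\ll k^2 X^{5/3+\vartheta+\varepsilon}$.

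**Main obstacle.** The delicate point is the off-diagonal estimate: one must verify that the separation of the frequencies $d^{2/3}m^{1/3}$ is large enough that the first-derivative-test savings genuinely beat the number of terms (which is of size roughly $k\,X_k$ when counting divisors $d$ of $k$ together with $m\le X_k/d^2$), and that no unexpected near-collisions among the values $d^{2/3}m^{1/3}$ with $d$ ranging over divisors of $k$ spoil the bound. Organizing the double sum by the value of $\min(d_1^2m_1,d_2^2m_2)$ and by the size of the gap $|d_1^2m_1-d_2^2m_2|$, and applying the Rankin--Selberg and Weil bounds dyadically, should make this transparent; everything else is routine bookkeeping of exponents.
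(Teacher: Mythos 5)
Your overall strategy coincides with the paper's: expand $\left|M(x,h/k)\right|^2$ into a double sum over $(d_1,m_1)$ and $(d_2,m_2)$, split into diagonal terms with $d_1^2m_1=d_2^2m_2$ and off-diagonal terms, and use Weil's bound plus Rankin--Selberg on the diagonal and the first derivative test off the diagonal. The substitution $x=u^3$ is a harmless cosmetic variant (the paper applies the first derivative test directly in $x$), and your diagonal treatment is sound; in fact the $X^{\vartheta}$ loss there is unnecessary, since $\sum_{m}\left|A(d,m)\right|^2m^{-4/3}\ll d^2$ follows from Rankin--Selberg by partial summation, and the diagonal is $\ll k^{1+\varepsilon}X^{5/3}$.

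The genuine gap is in the off-diagonal estimate as you have written it. You bound every oscillatory integral using the worst-case frequency separation, i.e.\ the phase-derivative lower bound $\gg X_k^{-2/3}/k$, so each integral is $\ll k\,X^{4/3}\,X_k^{2/3}\asymp k\,X^{2}$ (your stated bound $k\,X_k^{2/3}X^{2/3}$ already drops the amplitude factor $X^{2/3}$ of the integrand). There are $\asymp X^{2}$ off-diagonal pairs, since $\#\{(d,m):d\mid k,\ d^2m\leqslant X_k\}\asymp X$, and summing the coefficient and Kloosterman factors via $\sum_{d^2m\leqslant X_k}\left|A(d,m)\right|m^{-2/3}\ll d^{1/3}X^{1/3}$ then yields a total of order $k^{2+\varepsilon}X^{8/3}$ --- a full power of $X$ worse than the claim, so this step fails. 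Your assertion that the off-diagonal contribution is dominated by the diagonal one is also backwards: in the paper it is precisely the off-diagonal that produces the factors $k^2$ and $X^{\vartheta}$. What rescues the argument, and what you only gesture at in your closing remark about organizing by gap size, is keeping the factor $1/\left|d_1^2m_1-d_2^2m_2\right|$ inside the sum: for fixed $(d_1,m_1,d_2)$ the quantities $d_2^2m_2-d_1^2m_1$ run through an arithmetic progression modulo $d_2^2$, so $\sum_{m_2}\left|A(d_2,m_2)\right|/\left|d_2^2m_2-d_1^2m_1\right|\ll d_2^{-\vartheta}X^{\vartheta+\varepsilon}$ using the pointwise bound $A(d,m)\ll(dm)^{\vartheta+\varepsilon}$. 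This harmonic-sum saving of essentially $X^{1-\vartheta}$ over the uniform worst case is exactly what closes the deficit and is the source of the $X^{\vartheta}$ in the lemma; it must be carried out explicitly rather than deferred.
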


\begin{proof}
The sum $M(x,h/k)$ can be considered a sum $\Sigma_{d,m}$. The absolute value square $\left|M(x,h/k)\right|^2$ can be considered as a sum $\Sigma_{d_1,m_1}\overline{\Sigma_{d_2,m_2}}$. We integrate the double sum termwise and consider separately the ``diagonal terms'' in which $d_1^2m_1=d_2^2m_2$ and the ``off-diagonal terms'' in which $d_1^2m_1\neq d_2^2m_2$.

The diagonal terms contribute, using Weil's estimate for the Kloosterman sums, estimating the integrals by absolute values, applying the Cauchy--Schwarz inequality to the double sum over $m_1$ and $m_2$, where each possible value for $m_1$ uniquely determines the value of $m_2$, and vice versa, and finally employing the Rankin--Selberg estimate for the Fourier coefficients,
\begin{align*}
&\ll\sum_{d_1\mid k}\sum_{d_2\mid k}\underset{d_1^2m_1=d_2^2m_2}{\sum_{d_1^2m_1\leqslant X_k}\sum_{d_2^2m_2\leqslant X_k}}\frac{\left|A(d_1,m_1)\,A(d_2,m_2)\right|}{d_1\,d_2\,m_1^{2/3}\,m_2^{2/3}}\left(\frac{k^2}{d_1\,d_2}\right)^{1/2+\varepsilon}\,X^{5/3}\\
&\ll k^{1+\varepsilon}\,X^{5/3}\sum_{d_1\mid k}d_1^{-3/2-\varepsilon}\sum_{d_2\mid k}d_2^{-3/2-\varepsilon}\\
&\qquad\cdot
\sqrt{\underset{d_1^2m_1=d_2^2m_2}{\sum_{d_1^2m_1\leqslant X_k}\sum_{d_2^2m_2\leqslant X_k}}\frac{\left|A(d_1,m_1)\right|^2}{m_1^{4/3}}}
\sqrt{\underset{d_1^2m_1=d_2^2m_2}{\sum_{d_1^2m_1\leqslant X_k}\sum_{d_2^2m_2\leqslant X_k}}\frac{\left|A(d_2,m_2)\right|^2}{m_2^{4/3}}}\\
&\ll k^{1+\varepsilon}\,X^{5/3}\sum_{d_1\mid k}d_1^{-3/2-\varepsilon}\sum_{d_2\mid k}d_2^{-3/2-\varepsilon}\sqrt{d_1\,d_2}
\ll k^{1+\varepsilon}\,X^{5/3},
\end{align*}
as required.

In the off-diagonal terms, we meet integrals of the form
\[\int\limits_X^{2X}x^{2/3}\,\cos\!\left(\frac{6\pi d_1^{2/3}m_1^{1/3}x^{1/3}}k\right)\cos\!\left(\frac{6\pi d_2^{2/3}m_2^{1/3}x^{1/3}}k\right)\mathrm dx,\]
and by the first derivative test these are
\[\ll\frac{k\,X^{4/3}}{\left|d_1^{2/3}\,m_1^{1/3}-d_2^{2/3}\,m_2^{1/3}\right|}.\]
Thus, the off-diagonal terms contribute, by using Weil's estimate and Rankin-Selberg estimate,
\begin{align*}
&\ll\sum_{d_1\mid k}\sum_{d_2\mid k}\underset{d_1^2m_1<d_2^2m_2}{\sum_{d_1^2m\leqslant X_k}\sum_{d_2^2m\leqslant X_k}}\frac{\left|A(d_1,m_1)\,A(d_2,m_2)\right|}{d_1\,d_2\,m_1^{2/3}\,m_2^{2/3}}\\&\qquad\cdot\left(\frac{k^2}{d_1d_2}\right)^{1/2+\varepsilon}\frac{k\,X^{4/3}\,(d_2^{2/3}\,m_2^{1/3})^2}{\left|d_1^2\,m_1-d_2^2\,m_2\right|}\\
&\ll k^{2+\varepsilon}\,X^{4/3}\,\sum_{d_1\mid k}d_1^{-3/2-\varepsilon}\sum_{d_2\mid k}d_2^{-3/2+4/3-\varepsilon}\\&\qquad\cdot\sum_{d_1^2m_1\leqslant X_k}\frac{\left|A(d_1,m_1)\right|}{m_1^{2/3}}\sum_{d_1^2m_1<d_2^2m_2\leqslant X_k}\frac{\left|A(d_2,m_2)\right|}{d_2^2m_2-d_1^2m_1}\\
&\ll k^{2+\varepsilon}\,X^{4/3}\,\sum_{d_1\mid k}d_1^{-3/2-\varepsilon}\sum_{d_2\mid k}d_2^{-3/2+4/3-\varepsilon}\sum_{d_1^2m_1\leqslant X_k}\frac{\left|A(d_1,m_1)\right|}{m_1^{2/3}}\,d_2^{\vartheta+\varepsilon}\left(\frac X{d_2^2}\right)^{\vartheta+\varepsilon}\\
&\ll k^{2+\varepsilon}\,X^{4/3}\,\sum_{d_1\mid k}d_1^{-3/2-\varepsilon}\sum_{d_2\mid k}d_2^{-3/2+4/3-\varepsilon}d_1^{1/3}\,X^{1/3}\,d_2^{-\vartheta+\varepsilon}\,X^{\vartheta+\varepsilon}\\
&\ll k^{2+\varepsilon}\,X^{5/3+\vartheta+\varepsilon}\sum_{d_1\mid k}d_1^{-7/6-\varepsilon}\sum_{d_2\mid k}d_2^{-1/6-\vartheta-\varepsilon}
\ll k^2\,X^{5/3+\vartheta+\varepsilon},
\end{align*}
as required.
\end{proof}

\section{Proof of the pointwise bound in Corollary \ref{pointwise-gl3}}

\begin{proof}[Proof of Corollary \ref{pointwise-gl3}.]
We may assume that $k\ll x^{2/3}$ with a sufficiently small implicit constant so that Theorem \ref{gl3-truncated} may be applied, as for larger $k$ it is enough to observe that the Rankin--Selberg estimate says that the sum is $\ll x$.
The main terms from the truncated Voronoi identity contribute
\begin{align*}
&\ll x^{1/3}\sum_{d\mid k}\frac1d\sum_{d^2m\leqslant N_k}\frac{\left|A(d,m)\right|}{m^{2/3}}\left(\frac kd\right)^{1/2+\varepsilon}\\
&\ll k^{1/2+\varepsilon}\,x^{1/3}\sum_{d\mid k}d^{-3/2}\sum_{d^2m\leqslant N_k}\frac{\left|A(d,m)\right|}{m^{2/3}}\\
&\ll k^{1/2+\varepsilon}\,x^{1/3}\sum_{\substack{d\mid k,\\d\leqslant\sqrt{2N}}}d^{-3/2}\cdot d\cdot\left(\frac N{d^2}\right)^{1/3}\\
&\ll k^{1/2+\varepsilon}\,x^{1/3}\,N^{1/3}\sum_{\substack{d\mid k,\\d\leqslant\sqrt{2N}}}d^{-7/6}\ll k^{1/2+\varepsilon}\,x^{1/3}\,N^{1/3}.
\end{align*}
In particular, the sum over $d^2m\leqslant N_k$ vanishes identically if $d>\sqrt{2N}$.

Choosing $N=x$ gives
\[\ll k^{1/2+\varepsilon}\,x^{2/3}+k\,x^{1/3+\vartheta+\varepsilon}
.\]

When $k\ll x^{2/3-2\vartheta}$, we may choose $N=k^{3/4}\,x^{1/2+3\vartheta/2}$, which satisfies $1\leqslant N\ll x$ and $k\ll(xN)^{1/3}$, the latter with the implicit constant required by Theorem \ref{gl3-truncated}, to get
\[\ll k^{3/4}\,x^{1/2+\vartheta/2+\varepsilon}+k^{9/8+3\vartheta/4}\,x^{1/4+3\vartheta^2/2+3\vartheta/4+\varepsilon},\]
as claimed.
\end{proof}

\section*{Acknowledgements}

The authors would like to express their sincere gratitude to Anne-Maria Ernvall-Hyt\"onen for useful discussions and invaluable support.

During this research, the first author was funded by the Doctoral Programme for Mathematics and Statistics of the University of Helsinki. The second author was funded by the Academy of Finland through the Finnish Centre of Excellence in Inverse Problems Research and the projects 283262, 276031 and 282938, and by the Basque Government through the BERC 2014--2017 program and by Spanish  Ministry of Economy and Competitiveness MINECO: BCAM  Severo Ochoa excellence accreditation SEV-2013-0323.

\footnotesize
\normalsize

\end{document}